\documentclass[reqno, 12pt]{amsart}
\makeatletter
\let\origsection=\section \def\section{\@ifstar{\origsection*}{\mysection}} 
\def\mysection{\@startsection{section}{1}\z@{.7\linespacing\@plus\linespacing}{.5\linespacing}{\normalfont\scshape\centering\S}}
\makeatother        

\usepackage{amsmath,amssymb,amsthm}
\usepackage{mathrsfs}
\usepackage{mathabx}\changenotsign

\usepackage{xcolor}
\usepackage[backref]{hyperref}
\hypersetup{
    colorlinks,
    linkcolor={red!60!black},
    citecolor={green!60!black},
    urlcolor={blue!60!black}
}

\usepackage[abbrev,msc-links,backrefs]{amsrefs}
\usepackage[T1]{fontenc}
\usepackage{lmodern}

\usepackage[english]{babel}

\linespread{1.3}
\usepackage{geometry}
\geometry{left=27.5mm,right=27.5mm, top=25mm, bottom=25mm}

\usepackage{enumitem}
\def\rmlabel{\upshape({\itshape \roman*\,})}

\let\polishlcross=\l
\def\l{\ifmmode\ell\else\polishlcross\fi}

\let\emptyset=\varnothing
\let\setminus=\smallsetminus

\makeatletter
\def\moverlay{\mathpalette\mov@rlay}
\def\mov@rlay#1#2{\leavevmode\vtop{   \baselineskip\z@skip \lineskiplimit-\maxdimen
   \ialign{\hfil$\m@th#1##$\hfil\cr#2\crcr}}}
\newcommand{\charfusion}[3][\mathord]{
    #1{\ifx#1\mathop\vphantom{#2}\fi
        \mathpalette\mov@rlay{#2\cr#3}
      }
    \ifx#1\mathop\expandafter\displaylimits\fi}
\makeatother

\newtheorem{theorem}             {Theorem}[section]
\newtheorem{lemma}     	[theorem] {Lemma}        
   
\newtheorem{property}  	[theorem] {Property}   
\newtheorem{definition}	[theorem] {Definition}

\newtheorem{fact}	[theorem] {Fact}

\let\:=\colon

\def\calb{{\mathcal B}}

\def\calq{{\mathcal Q}}

\def\calt{{\mathcal T}}

\let\epsilon\varepsilon
\let\eps\varepsilon

\def\DISC{\mathop{\text{\rm DISC}}\nolimits}

\def\BDD{\mathop{\text{\rm BDD}}\nolimits}
\def\TUPLE{\mathop{\text{\rm TUPLE}}\nolimits}

\def\PAIR{\mathop{\text{\rm PAIR}}\nolimits}

\begin{document}

\title{Counting results for sparse pseudorandom hypergraphs~II}

\author[Y.~Kohayakawa]{Yoshiharu Kohayakawa}

\author[G.~O.~Mota]{Guilherme Oliveira Mota}
\address{Instituto de Matem\'atica e Estat\'{\i}stica, Universidade de S\~ao Paulo, S\~ao Paulo, Brazil}
\email{\{\,yoshi\,|\,mota\,\}@ime.usp.br}

\author[M.~Schacht]{Mathias Schacht}
\address{Fachbereich Mathematik, Universit\"at Hamburg, Hamburg, Germany}
\email{schacht@math.uni-hamburg.de}

\author[A.~Taraz]{Anusch Taraz}
\address{Institut f\"ur Mathematik, Technische Universit\"at Hamburg--Harburg, Hamburg, Germany}
\email{taraz@tuhh.de}

\thanks{Y.~Kohayakawa was partially supported by FAPESP (2013/03447-6, 2013/07699-0), 
	CNPq (310974/2013-5, 459335/2014-6), NUMEC/USP (Project MaCLinC/USP) and the 
	NSF (DMS~1102086). 
	G.~O.~Mota was supported by FAPESP (2009/06294-0, 2013/11431-2, 2013/20733-2). 
	M.~Schacht was supported by the Heisenberg-Programme of the DFG (grant SCHA 1263/4-1). 
	A.~Taraz was supported in part by DFG grant TA 309/2-2.
	The cooperation was supported by a joint CAPES/DAAD PROBRAL (333/09 and 430/15).}

\keywords{Hypergraphs, Counting lemma, pseudorandomness}
\subjclass[2010]{05C60 (primary), 05C65 (secondary)}

\begin{abstract}
We present a variant of a universality result of R\"odl~[On universality of graphs with uniformly distributed edges, Discrete
Math. 59 (1986), no. 1-2, 125--134]
for sparse, $3$-uniform hypergraphs contained in strongly jumbled 
hypergraphs. 
One of the ingredients of our proof is a counting lemma for fixed 
hypergraphs in sparse ``pseudorandom'' uniform hypergraphs, which is proved in the companion paper [Counting results 
for sparse 
pseudorandom hypergraphs I].
\end{abstract}

\maketitle

\section{Introduction}
We say that a graph $G=(V,E)$ satisfies property $\calq(\eta,\delta,\alpha)$ if, for 
every subgraph $G[S]$ induced by 
$S\subset V$ with $|S|\geq\eta |V|$, we have
$(\alpha - \delta){|S|\choose 2} < |E(G[S])| < (\alpha + \delta){|S|\choose 2}$.
In~\cites{KoRo03,Ro86}, answering affirmatively a question posed by Erd\H{o}s (see, 
e.g.,\cite{Er79}~and~\cite{Bo04}*{p.~363}; see also~\cite{Ni01}), R\"odl 
proved the following result.

\begin{theorem}\label{thm:rodl2}
For all $k\geq 1$ and $0<\alpha$, $\eta<1$, there exist $\delta$, $n_0>0$ such that 
the following holds for all integer $n\geq n_0$. 

Every $n$-vertex graph $G$ that satisfies $\calq(\eta,\delta,\alpha)$ contains all graphs with $k$ 
vertices as induced subgraphs.
\end{theorem}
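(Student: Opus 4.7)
The plan is to construct an induced copy of $H$ in $G$ by a greedy vertex-by-vertex embedding. Fix an enumeration $h_1, \dots, h_k$ of $V(H)$. After placing $h_j \mapsto v_j$ for $1 \le j \le i$, define, for each unembedded index $t > i$, the candidate set
\[
C_t^{(i)} = \bigl\{\, w \in V(G) \setminus \{v_1, \dots, v_i\} : \text{for every } j \le i,\ v_j w \in E(G) \iff h_j h_t \in E(H) \,\bigr\}.
\]
The goal is to pick $v_{i+1} \in C_{i+1}^{(i)}$ at each step so that every $C_t^{(i+1)}$ with $t > i+1$ stays substantial, culminating in $|C_k^{(k-1)}| \ge 1$.

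The main technical tool is a bipartite density consequence of $\calq$. For disjoint $A, B \subseteq V(G)$ with $|A|$, $|B|$, and $|A \cup B|$ all at least $\eta n$, the identity $e(A \cup B) = e(A) + e(B) + e(A, B)$ combined with the three $\calq(\eta,\delta,\alpha)$ bounds yields
\[
e(A, B) = \alpha |A||B| \pm O(\delta n^2).
\]
The same reasoning applied to the complement graph $\overline{G}$ (which satisfies $\calq(\eta,\delta,1-\alpha)$) controls non-edges across $A$ and $B$. A Markov-type consequence is that for every such $B$, at most $O(\delta n/(\gamma \eta))$ vertices $v \in A$ can have $|N_G(v) \cap B|$ deviating from $\alpha |B|$ by more than $\gamma |B|$, and similarly for $|B \setminus N_G(v)|$ relative to $(1-\alpha)|B|$.

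The inductive step then goes as follows. Assume each $|C_t^{(i)}| \ge \eta n$. For every $t > i+1$, let $\beta_t \in \{\alpha, 1-\alpha\}$ according to whether $h_{i+1} h_t \in E(H)$. Applying the typical-vertex estimate with $A = C_{i+1}^{(i)}$ and $B = C_t^{(i)}$, and taking a union bound over the at most $k$ indices $t$, one finds a $v_{i+1} \in C_{i+1}^{(i)}$ that simultaneously satisfies $|C_t^{(i+1)}| \ge (\beta_t - \gamma)\,|C_t^{(i)}|$ for all such $t$. Iterating, every surviving candidate set shrinks by at most a factor of roughly $\beta := \min\{\alpha, 1-\alpha\}$ per step, so $|C_k^{(k-1)}| \gtrsim \beta^{k-1} n$ at the end; choosing $\gamma$ and $\delta$ polynomially small in $\alpha, \eta, k$ keeps all error terms harmless.

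The main obstacle, and what determines the dependence of $\delta$ on $\eta$, $\alpha$, and $k$, is preserving the invariant $|C_t^{(i)}| \ge \eta n$ so that $\calq$ keeps applying. This is automatic when $\eta$ is at most $\beta^{k-1}/k$ or so, but the theorem requires $\eta$ arbitrary in $(0,1)$. For larger $\eta$ the naive iteration exits the regime where $\calq$ is directly usable after only a few steps, and an additional ingredient is required: for instance, a preliminary pruning step that discards ``irregular'' vertices to pass to an induced subgraph on which $\calq$ holds with a much smaller effective $\eta$-parameter, or a finer analysis exploiting the small value of $\delta$ to extend the bipartite density estimate down to sets somewhat smaller than $\eta n$. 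Carrying out this calibration carefully is the delicate part, and it is here that $\delta$ needs to be chosen sufficiently small.
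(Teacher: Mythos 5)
Your greedy embedding scheme is the right skeleton, and you correctly diagnose where it breaks: once the candidate sets $C_t^{(i)}$ shrink below $\eta n$, property $\calq(\eta,\delta,\alpha)$ no longer applies directly, and since $\eta$ may be arbitrarily close to $1$ this happens essentially immediately. But diagnosing the obstacle is not the same as overcoming it, and the two candidate fixes you sketch at the end are left as hopes rather than arguments. The first one (``discard irregular vertices to pass to an induced subgraph with a smaller effective $\eta$'') does not obviously work: $\calq(\eta,\delta,\alpha)$ with $\eta$ close to $1$ gives no information about most induced subgraphs, so there is no a priori reason a pruning step produces a subgraph satisfying $\calq(\eta',\delta',\alpha)$ for small $\eta'$. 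The second one (``a finer analysis extending the density estimate below $\eta n$'') is exactly R\"odl's actual proof, and it is the whole content of the theorem, not an optional calibration.

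What is missing is the following bootstrapping argument, which is precisely what Lemma~\ref{lemma:Rodl-extension} of this paper carries out for the $3$-uniform analog. Fix $\eps^*>0$ as small as you need (in particular $\eps^*\ll\beta^{k}/k$, where $\beta=\min\{\alpha,1-\alpha\}$), independent of $\eta$, and partition $V$ into $t=1/\eps^*$ blocks $V_1,\dots,V_t$ of size $\eps^* n$. You cannot apply $\calq$ to a single block, but you can apply it to unions of $\geq \eta t$ blocks. Comparing $e(V_i\cup W)$ and $e(V_{i'}\cup W)$ for a fixed union $W$ of roughly $(1-\eps^*)t$ blocks, and using the identity $e(A\cup B)=e(A)+e(B)+e(A,B)$, one shows that the pairwise densities $d(V_i,V_j)$ differ from one another by $O(\delta/\eps^{*2})$. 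Then an averaging step, using $\calq$ once more on a large union to pin down the global density, forces every $d(V_i,V_j)$ to lie within $O(\delta/\eps^{*2})$ of $\alpha$. Choosing $\delta\ll\eps^{*3}$ makes this error at most $\eps^*$. This gives density control on all sets of size a multiple of $\eps^* n$, and hence (by padding) on all sets of size $\geq 2\eps^* n$, with no reference to $\eta$. Only now does your greedy embedding close: $C_t^{(i)}$ shrinks by a factor of roughly $\beta$ per step, so it stays above $2\eps^* n$ throughout provided $\eps^*\leq\beta^{k}/4$, say. Without this lemma your invariant $|C_t^{(i)}|\geq\eta n$ fails after a bounded number of steps whenever $\eta>\beta$, and the proof does not go through.

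A secondary, smaller issue: your bipartite estimate $e(A,B)=\alpha|A||B|\pm O(\delta n^2)$ requires $|A\cup B|\geq\eta n$, which you do get for free once both sets are large, but after the bootstrapping step you will be working with sets of size $\Theta(\eps^* n)$ and the error term you need is $\pm\eps^*|A||B|$, i.e.\ proportional to $|A||B|$, not to $n^2$. This is again why $\delta$ must be taken small relative to $\eps^*$, polynomially in the manner made explicit above.
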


The quantification in Theorem~\ref{thm:rodl2} is what makes it unexpected. Indeed, note that $\eta$ is not required to be small, 
it is allowed to be any constant less than~$1$.

We prove a variant of this 
result, which allows one to count the
number of copies (not necessarily induced) of certain fixed $3$-uniform 
linear hypergraphs 
in spanning subgraphs of sparse ``jumbled'' $3$-uniform hypergraphs.

The concept of jumbledness~\cites{Th87a,Th87b} is well-known for graphs (see also~\cites{ChGr02, ChGr08, ChGrWi88, 
KrSu06}).
Let $\Gamma=(V,E)$ be a $3$-uniform hypergraph and let $X\subset{V\choose 2}$ and $Y\subset V$ be given. 
Denote by $E_{\Gamma}(X,Y)$ the set of triples in $\Gamma$ containing a pair in $X$ and a vertex in 
$Y$. Write $e_{\Gamma}(X,Y)$ for $|E_{\Gamma}(X,Y)|$. We say that 
$\Gamma$ is  $(p,\beta)$\emph{-jumbled} if, for all subsets $X\subset {V\choose 2}$ and 
$Y\subset V$, we have $\big|e_{\Gamma}(X,Y) - p|X||Y|\big|\leq \beta\sqrt{|X||Y|}$.
A hypergraph $H$ is called \emph{linear} if every pair of edges shares at 
most one vertex.
An edge $e$ of a linear $\ell$-uniform hypergraph~$H$ is a \emph{connector} if 
there exist $v\in 
V(H)\setminus e$ and $\ell$ edges $e_1,\ldots,e_\ell$ containing $v$ such that $|e\cap e_i|=1$ for 
$1\leq i\leq \ell$. 
Note 
that, for $\ell=2$, a connector is an edge that is contained in a triangle.

We prove a result that allows us to count the number of copies of 
small linear, connector-free $3$-uniform 
hypergraphs $H$ contained in certain $n$-vertex $3$-uniform spanning subhypergraphs $G_n$ of 
$(p,o(p^2 n^{3/2}))$-jumbled 
hypergraphs, for sufficiently large $p$ and $n$. We remark that, 
if $p\gg n^{-1/4}$, then the random $3$-uniform hypergraph, where each possible 
edge exists with probability $p$ independently of all other edges, is $(p,\gamma p^2 
n^{3/2})$-jumbled with high 
probability for all 
$\gamma>0$. 
Therefore, our result applies to dense enough random $3$-uniform hypergraphs.

This paper is organized as follows. In Section~\ref{sec:main} we state the main result of this paper 
(Theorem~\ref{thm:ext-rodl2}) and we discuss the structure of its proof. 
Section~\ref{sec:lemmas} contains the statements and the proofs of the lemmas involved in the proof of 
Theorem~\ref{thm:ext-rodl2}. Section~\ref{sec:proof} contains the proof of Theorem~~\ref{thm:ext-rodl2}. The final 
section contains some concluding remarks.

\section{Main result}\label{sec:main}

We start by generalizing property 
$\calq(\eta,\delta,\alpha)$ to $3$-uniform hypergraphs. We say that a $3$-uniform hypergraph 
$G=(V,E)$ satisfies  
property $\calq'(\eta,\delta,q)$ if, for all $X\subset{V\choose 2}$ and $Y\subset V$ with $|X|\geq 
\eta{|V|\choose 
2}$ and $|Y|\geq \eta |V|$, we have $(1-\delta)q|X||Y|\leq |E_G(X,Y)|\leq (1+\delta)q|X||Y|$. Considering the 
cardinality of $E_G(X,Y)$ for certain $X\subset{V\choose 2}$ and $Y\subset V$ to obtain information on the subhypergraphs of $G$ 
has recently been shown to be fruitful (see~\cites{ReRoSc16+b,ReRoSc16+a}).

Given a pair $\{v_1,v_2\}\in{V\choose 2}$, define $N_G(\{v_1,v_2\})=\big\{v_3\in 
V\colon \{v_1,v_2,v_3\}\in E\}$.
We say that a $3$-graph $G=(V,E)$ satisfies property $\BDD(k,C,q)$ if, for all 
$1\leq r\leq k$ and for all distinct $S_1,\ldots,S_r\in {V\choose {2}}$, we have 
$|N_G(S_1)\cap\ldots\cap N_G(S_r)| \leq Cnq^r$.

An \emph{embedding} of a hypergraph $H$ into another hypergraph $G$ is an injective mapping $\phi\colon 
V(H)\to V(G)$ such that $\{\phi(v_1),\ldots,\phi(v_k)\}\in E(G)$ whenever $\{v_1,\ldots,v_k\}\in E(H)$. We denote by 
$\mathcal{E}(H,G)$ the family of embeddings from $H$ into $G$. The following variant of Theorem~\ref{thm:rodl2} for $3$-uniform 
hypergraphs is our main result. 

\begin{theorem}\label{thm:ext-rodl2}
For all $0<\eps$, $\alpha$, $\eta<1$, $C>1$, and integer $k\geq 4$, there exist 
$\delta,\gamma>0$ such that if $p=p(n)\gg n^{-1/k}$ and $p=p(n)=o(1)$ and $n$ is 
sufficiently large, then the 
following holds for every $\alpha p\leq q\leq p$ and every $\beta\leq\gamma p^2n^{3/2}$.
Suppose that
\begin{enumerate}[label=\rmlabel]
 \item $\Gamma=(V,E_\Gamma)$ is an $n$-vertex $(p,\beta)$-jumbled $3$-uniform hypergraph;
 \item $G=(V,E_G)$ is a spanning subhypergraph of $\Gamma$ with $|E_G|=q{n\choose 3}$ and $G$ satisfies 
	     $\calq'(\eta,\delta,q)$ and  $\BDD(k,C,q)$.
\end{enumerate}
Then for every linear $3$-uniform connector-free hypergraph $H$ on 
$k$ vertices we have
\begin{equation*}
\big||\mathcal{E}(H,G)| - n^{k}q^{|E(H)|}\big| < \eps n^{k}q^{|E(H)|}.
\end{equation*}
\end{theorem}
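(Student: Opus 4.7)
The plan is to derive Theorem~\ref{thm:ext-rodl2} from the counting lemma for sparse pseudorandom $3$-uniform hypergraphs established in the companion paper. That counting lemma provides an asymptotic of the form $|\mathcal{E}(H,G)| = (1 \pm \eps) n^k q^{|E(H)|}$ under a sufficiently strong local pseudorandomness hypothesis on $G$; the role of the present paper will be to show that the combination of $\calq'(\eta,\delta,q)$, $\BDD(k,C,q)$, and the $(p,\beta)$-jumbledness of $\Gamma$ is enough to imply that hypothesis.

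My first step would be to pass to a partite setting by choosing a partition $V = V_1 \dcup \cdots \dcup V_k$ into $k$ classes of sizes close to $n/k$, for instance via a uniformly random equipartition. Using the $(p,\beta)$-jumbledness of $\Gamma$, I would verify that with positive probability the tripartite restriction of $\Gamma$ to any three classes $(V_i,V_j,V_l)$ is jumbled at the corresponding scale, and that $G$ inherits the density $q$ on triples with one vertex in each of three distinct classes. The property $\calq'(\eta,\delta,q)$ would be used precisely here: specialising the sets $X$ and $Y$ in its definition to (sufficiently large) subsets of the products $V_i \times V_j$ and of single classes $V_l$ yields local density estimates of the form $e_G(X,Y) = (1 \pm \delta')q|X||Y|$ for a $\delta'$ controlled by $\delta$, $\eta$, and $k$. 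The condition $\BDD(k,C,q)$ is inherited by restriction and supplies the control on common neighbourhoods of up to $k$ pairs needed to handle the connector-free structure of $H$.

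Once the partite setup is in place, I would apply the counting lemma from Paper~I to count embeddings of $H$ into $G$ that respect a fixed bijection between $V(H)$ and $\{V_1,\ldots,V_k\}$; summing over the $k!$ such bijections yields the total number of embeddings of $H$ into $G$, with a negligible error of order $O(n^{k-1})$ coming from maps that send two vertices of $H$ to the same class. The part I expect to be most delicate is the transfer of the global density condition $\calq'$ into the local tripartite regularity required by the counting lemma at the scale $n/k$ of the parts: when $\eta$ is close to $1$ the hypothesis $\calq'$ controls only subsets of $V$ that are themselves a definite fraction of $V$, and bridging this gap at smaller scales will require carefully combining $\calq'$ with the jumbledness of $\Gamma$ and with $\BDD(k,C,q)$. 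Isolating clean statements for each of these ingredients, and tracking how the various error parameters propagate, is presumably the purpose of the lemmas in Section~\ref{sec:lemmas}.
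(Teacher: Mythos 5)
Your high-level plan — verify that $\calq'$, $\BDD$, and jumbledness together imply the hypothesis of the companion paper's counting lemma, and then apply that lemma — is correct, but the execution you sketch goes in a direction that the paper does not take and, more importantly, leaves the genuinely hard step unresolved.

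First, a structural point: the counting lemma from Paper~I (Lemma~\ref{lemma:counting} here) is not a partite lemma. It takes a single $n$-vertex hypergraph $G$ with $|E(G)|=q\binom{n}{3}$ satisfying $\BDD(D_H,C,q)$ and $\TUPLE(\delta,q)$ and counts embeddings of $H$ into $G$ directly. There is no randomized equipartition into $k$ classes, no reduction to canonical (partite) embeddings, and no sum over bijections. So the hypothesis you must verify is the codegree-type property $\TUPLE(\delta,q)$ for $G$ on all of $V$, not a family of tripartite density/jumbledness statements on classes of size $n/k$. Your proposed partite reduction adds work without addressing this.

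Second, and this is where the real gap lies, you flag the correct difficulty — that $\calq'(\eta,\delta,q)$ only constrains $E_G(X,Y)$ when $|X|\geq\eta\binom{n}{2}$ and $|Y|\geq\eta n$, so when $\eta$ is close to $1$ it says nothing directly about sets of size on the order of $\binom{n/k}{2}$ — but you offer no mechanism to bridge it. This is precisely the ``unexpected'' content of Theorem~\ref{thm:rodl2}, and it is solved in the paper by Lemma~\ref{lemma:Rodl-extension}: an averaging/telescoping argument showing that $\calq'$ at macroscopic scales actually forces $d_q(C,D)=1\pm\eps^*$ for much smaller (and arbitrary, not merely partite) sets $C,D$. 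Without this step, density control at the scales you need is simply unavailable from $\calq'$ alone, and jumbledness of $\Gamma$ (which only bounds $e_\Gamma$, not $e_G$) cannot substitute for it on a subhypergraph $G$ that could in principle distribute its edges adversarially below the $\calq'$ threshold. Moreover, even granting the small-scale density bounds, $\TUPLE$ is a statement about second-order (codegree) statistics of $G$; the paper obtains it via $\DISC\Rightarrow\PAIR$ (Lemma~\ref{lemma:disc->pair}, which is where the full strength $\beta\leq\gamma p^2 n^{3/2}$ of the jumbledness is actually used) and then $\PAIR\Rightarrow\TUPLE$ (Lemma~\ref{lemma:pair->tuple}). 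Your outline stops at first-order density and never produces these codegree estimates, which are the actual input to Lemma~\ref{lemma:counting}.
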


The proof of Theorem~\ref{thm:ext-rodl2} requires several techniques.
First, we shall prove that, under the conditions of the theorem, $G$ satisfies a 
strong property involving degrees and co-degrees 
(see~Lemmas~\ref{lemma:pair->tuple},~\ref{lemma:Q->disc}~and~\ref{lemma:disc->pair}).
After that we 
 use an embedding result (Lemma~\ref{lemma:counting}) proved in~\cite{KoMoScTa14+I}. Before we 
discuss the scheme of 
the proof, let us 
define some hypergraph properties, called \emph{Discrepancy}, \emph{Pair}, and \emph{Tuple}.

\begin{property}[$\DISC$ -- Discrepancy property]
Let $G=(V,E)$ be a $3$-uniform hypergraph and let $X$, $Y\subset V$ be given. We say that the pair $(X,Y)$ 
satisfies 
$\DISC(q,p,\eps)$ in $G$ $($or $(X,Y)_G$ satisfies $\DISC(q,p,\eps))$ if for all $X'\subset{X\choose 
2}$ 
and $Y'\subset Y$ we have
\begin{equation*}
 \big|e_G(X',Y')-q|X'||Y'|\big|\leq \eps p{|X|\choose 2}|Y|.
\end{equation*}
Furthermore, if $(V,V)$ satisfies $\DISC(q,p,\eps)$ in $G$, then we say that the hypergraph 
$G$ satisfies $\DISC(q,p,\eps)$. 
\end{property}

For a $3$-uniform hypergraph $G=(V,E)$, a set of vertices $Y\subset V$, and pairs 
$S_1,S_2\in {V\choose 2}$ we denote $N_G(S_1)\cap Y$ by $N_G(S_1;Y)$ and 
$N_G(S_1)\cap N_G(S_2)\cap Y$ by $N_G(S_1,S_2;Y)$.

\begin{property}[$\PAIR$ -- Pair property]
Let $G=(V,E)$ be a $3$-uniform hypergraph and let~$X$, $Y\subset V$ be given. We say that the pair~$(X,Y)$ 
satisfies 
$\PAIR(q,p,\delta)$ in $G$ $($or $(X,Y)_G$ satisfies $\PAIR(q,p,\delta))$ if the following conditions 
hold:
\begin{equation*}
 \begin{aligned}
 \sum_{S_1\in {X\choose 2}} \big||N_G(S_1;Y)|-q|Y|\big|&\leq \delta p {|X|\choose 2}|Y|,\\
 \sum_{S_1\in {X\choose 2}} \sum_{S_2\in {X\choose 2}}\big||N_G(S_1,S_2;Y)|-q^2|Y|\big|&\leq \delta 
p^2 {|X|\choose 2}^2|Y|.
 \end{aligned}
\end{equation*}
Furthermore, if $(V,V)$ satisfies $\PAIR(q,p,\delta)$ in $G$, then we say that the hypergraph 
$G$ satisfies $\PAIR(q,p,\delta)$.
\end{property}

\begin{property}[$\TUPLE$ -- Tuple property]
We define $\TUPLE(\delta,q)$ as the family of $n$-vertex $3$-uniform hypergraphs $G=(V,E)$ such 
that the following two conditions hold:
\begin{enumerate}[label=\rmlabel]
 \item $\big||N_G(S_1)|-nq\big|<\delta nq$ for all but at most $\delta{n\choose 2}$ sets 
$S_1\in {V\choose 2}$;
 \item $\big||N_G(S_1)\cap N_G(S_2)|-nq^2\big|<\delta nq^2$ for all but at most 
$\delta{{n\choose 2}\choose 2}$ 
pairs $\{S_1,S_2\}$ of distinct sets in ${V\choose 2}$.
\end{enumerate}
\end{property}

The next result allows us to obtain property $\TUPLE$ from $\PAIR$. Since its proof is simple we will omit it.

\begin{lemma}\label{lemma:pair->tuple}
For all $0<\alpha\leq 1$ and $0<\delta<1$ there exists $\delta'>0$ such that if a $3$-uniform 
hypergraph $G$ 
satisfies 
$\PAIR(q,p,\delta')$ for $\alpha p\leq q\leq p$, then $G$ satisfies $\TUPLE(\delta,q)$.
\end{lemma}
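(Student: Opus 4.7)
The plan is to derive both conditions of $\TUPLE(\delta,q)$ directly from the two sum bounds in $\PAIR(q,p,\delta')$ (applied with $X=Y=V$) via a routine averaging (Markov) argument. Throughout I would use the hypothesis $q\geq \alpha p$ to convert the $p$-powers on the right-hand sides of the $\PAIR$ inequalities into $q$-powers, at the cost of a factor $\alpha^{-r}$.

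For condition (i) of $\TUPLE(\delta,q)$, let $B_1$ denote the set of pairs $S_1\in \binom{V}{2}$ with $\big||N_G(S_1)|-qn\big|\geq \delta qn$. The first $\PAIR$ bound immediately yields
\[
|B_1|\cdot \delta qn \;\leq\; \sum_{S_1\in \binom{V}{2}}\big||N_G(S_1;V)|-q|V|\big| \;\leq\;\delta'\, p\binom{n}{2}n,
\]
so that $|B_1|\leq (\delta'/(\delta\alpha))\binom{n}{2}$, which is at most $\delta\binom{n}{2}$ as soon as $\delta'\leq \delta^2 \alpha$.

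For condition (ii), I would run the same argument on the second (double) sum of $\PAIR$, after noting that this sum ranges over ordered pairs while the $\TUPLE$ condition counts unordered pairs of distinct sets (introducing a harmless factor of $2$) and that the diagonal contribution from $S_1=S_2$ is of lower order. The Markov step then shows that the number of exceptional unordered pairs is bounded by a constant multiple of $\bigl(\delta'/(\delta\alpha^2)\bigr)\binom{n}{2}^2$, which is acceptable provided $\delta'$ is a small multiple of $\delta^2\alpha^2$. Taking $\delta':=\delta^2\alpha^2/4$, say, then satisfies both requirements simultaneously.

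There is no genuine obstacle here: the statement is the familiar assertion that $L^1$-control of a nonnegative sequence forces a small exceptional set. The only bookkeeping is keeping track of the factor of $2$ arising from ordered versus unordered pairs, together with the exponent of $\alpha^{-1}$ that is introduced when converting $p$-bounds to $q$-bounds via $q\geq \alpha p$.
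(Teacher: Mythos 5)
Your argument is correct, and since the paper explicitly omits the proof of this lemma as ``simple,'' your Markov-inequality argument is precisely the routine averaging argument the authors had in mind. The bookkeeping checks out: with $\delta'=\delta^2\alpha^2/4$, the first $\PAIR$ sum gives $|B_1|\leq(\delta'/(\delta\alpha))\binom{n}{2}\leq\delta\binom{n}{2}$, and the second gives at most $(\delta'/(2\delta\alpha^2))\binom{n}{2}^2=(\delta/8)\binom{n}{2}^2\leq\delta\binom{\binom{n}{2}}{2}$ bad unordered pairs of distinct sets once $\binom{n}{2}\geq 2$.
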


In what follows we explain the organization of the proof. Consider the setup of 
Theorem~\ref{thm:ext-rodl2}. 
In order to obtain the conclusion of the theorem, we will use 
a counting result (Lemma~\ref{lemma:counting}), which requires that $G$ satisfies properties 
$\BDD$ and $\TUPLE$ for the appropriate parameters. Since $G$ 
satisfies $\BDD$ by hypothesis, it suffices to prove that $G$ satisfies $\TUPLE$. Using 
Lemma~\ref{lemma:Q->disc} it is possible to obtain $\DISC$ from property $\mathcal{Q}'$. Then, 
using that $G$ satisfies $\DISC$ one can show that $G$ satisfies $\PAIR$ using 
Lemma~\ref{lemma:disc->pair}, which implies $\TUPLE$ by Lemma~\ref{lemma:pair->tuple}. The 
quantification used in these implications is carefully analyzed in Section~\ref{sec:proof}.

\section{Main lemmas}\label{sec:lemmas}

We start by stating the counting lemma needed in the proof of Theorem~\ref{thm:ext-rodl2}. In order 
to apply it to a $3$-uniform 
$n$-vertex hypergraph $G$, we shall prove that $G$ satisfies $\TUPLE(\delta,q)$ for 
a sufficiently small $\delta$ and 
sufficiently large $0<q=q(n)\leq 1$. Since Lemma~\ref{lemma:pair->tuple} allows us to obtain 
$\TUPLE$ from $\PAIR$, we need to proof that $G$ satisfies $\PAIR(q,p,\delta')$ for a sufficiently 
small $\delta'$ and appropriate functions $p$ and $q$. This is done using 
Lemmas~\ref{lemma:Q->disc}~and~\ref{lemma:disc->pair}, which are proved, respectively, in the 
Subsections~\ref{sec:q-disc}~and~\ref{sec:disc-pair}

Given a $3$-uniform hypergraph $H$, we define parameters $d_H=\max\{\delta(J)\colon J\subset H\}$ 
and~$D_H=\min\{3d_H,\Delta(H)\}$. The following result, proved in~\cite{KoMoScTa14+I}, is our counting 
lemma.

\begin{lemma}\label{lemma:counting}
Let $k\geq 4$ be an integer and let 
$\eps>0$, $C>1$ and an integer $d\geq 2$ be fixed. Let $H$ be a linear $3$-uniform connector-free 
hypergraph on 
$k$ vertices such that $D_H\leq d$. Then, there exists $\delta>0$ for which 
the following holds for any $q=q(n)$ with $q\gg n^{-1/d}$ and $q=o(1)$ and for sufficiently large~$n$. 

If $G$ is an $n$-vertex $3$-uniform hypergraph with $|E(G)|=q{n\choose 3}$ hyperedges and $G$ satisfies 
$\BDD(D_H,C,q)$ and $\TUPLE(\delta,q)$, then
\begin{equation*}
\big||\mathcal{E}(H,G)| - n^{k}q^{|E(H)|}\big| < \eps n^{k}q^{|E(H)|}.
\end{equation*}
\end{lemma}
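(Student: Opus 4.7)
The plan is to prove the counting lemma by embedding the vertices of $H$ into $G$ in a carefully chosen order, tracking the number of valid partial embeddings at each step. Fix an ordering $v_1, \ldots, v_k$ of $V(H)$, and for each $i$ let $e_i = |E(H[\{v_1, \ldots, v_i\}])|$ and let $d_i$ denote the number of edges of $H$ contained in $\{v_1, \ldots, v_i\}$ that contain $v_i$. Since $H$ is linear and $D_H \leq d$, one can choose the ordering so that $d_i \leq d$ for every~$i$. For each edge of $H$ joining $v_i$ to earlier vertices, the two earlier endpoints form a pair in $\{v_1, \ldots, v_{i-1}\}$, and by linearity the $d_i$ pairs obtained this way are all distinct.

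I would prove by induction on $i$ that the number $N_i$ of valid partial embeddings $\phi\colon \{v_1,\ldots,v_i\} \to V(G)$ satisfies $N_i = (1 \pm \eps_i)\, n^i q^{e_i}$ for a mildly growing sequence $\eps_i$. For a valid partial embedding $\phi$ of size $i$, the number of admissible extensions to $v_{i+1}$ equals $|N_G(S_1) \cap \cdots \cap N_G(S_{d_{i+1}})|$ up to a correction of order $i \ll n$ for injectivity, where $S_1,\ldots,S_{d_{i+1}}$ are the images under $\phi$ of the pairs described above. Summing over partial embeddings yields
\begin{equation*}
N_{i+1} = \sum_{\phi} \bigl| N_G(S_1(\phi)) \cap \cdots \cap N_G(S_{d_{i+1}}(\phi)) \bigr| \,-\, O(i\, N_i).
\end{equation*}

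The core task is thus showing that, for all but $o(N_i)$ partial embeddings $\phi$, the intersection above has size $(1 + o(1))\, n q^{d_{i+1}}$. When $d_{i+1} = 1$ this follows directly from the first clause of $\TUPLE(\delta,q)$, and when $d_{i+1} = 2$ from the second clause. For $d_{i+1} \geq 3$ I would bootstrap via a second-moment argument: for fixed ``good'' pairs $S_3, \ldots, S_r$, expand
\begin{equation*}
\sum_{S_1, S_2} \bigl( |N_G(S_1) \cap N_G(S_2) \cap \cdots \cap N_G(S_r)| - q^r n \bigr)^2
\end{equation*}
and regroup into quantities controlled by the pair-of-pairs clause of $\TUPLE$ applied inside $N_G(S_3) \cap \cdots \cap N_G(S_r)$, whose cardinality is bounded from above by $C n q^{r-2}$ via $\BDD(k,C,q)$. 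The connector-free hypothesis on $H$ ensures that the combinatorial configurations appearing in this expansion do not reproduce an obstruction beyond the reach of pair-based control; more concretely, if a connector were present, then the common neighborhood of the pairs incident to~$v_{i+1}$ would essentially be pinned down by a single additional pair, which is exactly the ``irreducible'' structure that forces genuinely higher-order pseudorandomness.

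The principal difficulty is controlling the propagation of ``bad'' partial embeddings through the $k-1$ inductive steps. A single step contributes a multiplicative factor of $1 + o(1)$ for good partial embeddings, but each bad embedding can be extended in up to $C n q^{d_{i+1}}$ ways by $\BDD$, so the bad set must be kept small enough at each level that its inflation by this factor remains $o(n^{i+1} q^{e_{i+1}})$. I would therefore run the induction with a double bookkeeping, tracking both the main-term approximation and a quantitative upper bound on the exceptional set, and choose the input~$\delta$ in $\TUPLE(\delta,q)$ small as a function of $\eps$, $C$, $d$, and $k$ so that after $k$ inflations the accumulated loss still yields the required estimate $|\mathcal{E}(H,G)| = (1 \pm \eps)\, n^k q^{|E(H)|}$. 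The assumption $q \gg n^{-1/d}$ is used precisely here, since it guarantees that the ``target'' size $n q^{d_{i+1}}$ at each step is much larger than the $\beta/(pq)$-type error that the second-moment step produces.
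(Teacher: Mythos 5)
First, a contextual note: this paper does not actually prove Lemma~\ref{lemma:counting}; it cites the companion paper~\cite{KoMoScTa14+I} for the proof, so there is no local proof to compare against. What follows is an assessment of your proposal on its own merits.

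Your inductive framework (embed $v_1,\dots,v_k$ in a degenerate order, track $N_i$, use $\TUPLE$ to estimate extension counts and $\BDD$ to bound the inflation coming from the bad set) is a sensible skeleton and the step $D_H\leq d\Rightarrow d_H\leq d$, hence a degenerate ordering with $d_i\leq d$, is correct. The cases $d_{i+1}\in\{1,2\}$ via the two clauses of $\TUPLE$ are also fine in outline. The genuine gap is the step for $d_{i+1}\geq 3$. You propose to expand a second moment and then ``regroup into quantities controlled by the pair-of-pairs clause of $\TUPLE$ applied inside $N_G(S_3)\cap\cdots\cap N_G(S_r)$.'' But $\TUPLE(\delta,q)$ is a global property of $G$: clause~(ii) says $|N_G(S_1)\cap N_G(S_2)|\approx nq^2$ for almost all pairs $\{S_1,S_2\}$ with respect to the full vertex set, and nothing in the hypotheses lets you restrict the ambient set to $W=N_G(S_3)\cap\cdots\cap N_G(S_r)$ (a set of size only $O(nq^{r-2})$) and conclude $|N_G(S_1)\cap N_G(S_2)\cap W|\approx |W|q^2$. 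There is no localized version of $\TUPLE$ available, and $\BDD$ only gives a one-sided upper bound. As written, the second-moment step simply cannot be carried out from the stated hypotheses.

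Two further points. First, your description of the role of the connector-free hypothesis (``pinned down by a single additional pair'' forcing ``higher-order pseudorandomness'') is too vague to serve as a proof step, and it does not repair the gap above: linear connector-free $3$-graphs can have $d_H\geq 3$ (a tripartite linear $3$-graph with $\delta\geq 3$ is automatically connector-free, since any vertex outside an edge lies in the same part as one of its endpoints and hence can be joined to at most two of them), so the $r\geq 3$ case genuinely arises and must be handled by something other than reducing to $r\leq 2$. Second, your invocation of a ``$\beta/(pq)$-type error'' from jumbledness is out of place: Lemma~\ref{lemma:counting} is stated purely in terms of $G$, $q$, $\BDD$, and $\TUPLE$, with no $\beta$ or $p$ in its hypotheses; the correct reason $q\gg n^{-1/d}$ is needed is simply to ensure $nq^{d_{i+1}}\to\infty$ so that the additive $O(k)$ injectivity correction and the exceptional contributions from $\TUPLE$'s bad pairs are lower order.
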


\subsection{\texorpdfstring{$\calq'$}{Q'} implies \texorpdfstring{$\DISC$}{DISC}}\label{sec:q-disc}
Given a $3$-uniform hypergraph $G=(V,E)$ and subsets $A\subset {V\choose 2}$ and 
non-empty $B\subset V$, the \emph{$q$-density} between $A$ and $B$ is defined as 
\[
	d_q(A,B)=\frac{|E_G(A,B)|}{q|A||B|}\,.
\]
Before we state the main result of this subsection, Lemma~\ref{lemma:Q->disc}, we shall prove the 
following result, which is inspired by a result in~\cite{Ro86} for graphs.

\begin{lemma}\label{lemma:Rodl-extension}
For all $0<\eta<1$ and $0<\eps^*<(1-\eta)/3$, there exists $\delta>0$ such 
that, if $G=(V,E)$ is an $n$-vertex $3$-uniform hypergraph that satisfies $\calq'(\eta,\delta,q)$, 
then 
the 
following holds.

For every $C\subset {V\choose 2}$ and $D\subset V$ such that $|C|$ is a multiple of 
$\lceil\eps^*{n\choose 
2}\rceil$ and $|D|$ is a multiple of $\lceil\eps^* n\rceil$, we have 
\begin{equation*}
1-\eps^* < d_q(C,D) < 1+\eps^*.
\end{equation*}
\end{lemma}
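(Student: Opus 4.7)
The plan is to express $e_G(C,D)$ as a signed sum of quantities $e_G(X,Y)$ with $|X|\geq\eta\binom{n}{2}$ and $|Y|\geq\eta n$, so that $\calq'(\eta,\delta,q)$ applies to each term. Throughout, write $f(X,Y):=e_G(X,Y)-q|X||Y|$; this discrepancy is bi-additive with respect to disjoint unions in either coordinate, which is what lets a few applications of $\calq'$ on large sets bootstrap to a bound for arbitrary $C$ and $D$.

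Set $m=\lceil\eps^*\binom{n}{2}\rceil$ and $m'=\lceil\eps^* n\rceil$. The hypothesis $\eps^*<(1-\eta)/3$ guarantees (for $n$ large) that $3m\leq(1-\eta)\binom{n}{2}$ and $3m'\leq(1-\eta)n$. Since every positive integer is a sum of $1$'s, $2$'s and $3$'s, I partition $C=C^{(1)}\dcup\cdots\dcup C^{(r)}$ with $|C^{(j)}|\in\{m,2m,3m\}$, and analogously $D=D^{(1)}\dcup\cdots\dcup D^{(s)}$ with $|D^{(k)}|\in\{m',2m',3m'\}$. Each block occupies at most a $(1-\eta)$-fraction of its ambient set, so I can choose padding sets $\tilde C^{(j)}\subset\binom{V}{2}\setminus C^{(j)}$ of size $\lceil\eta\binom{n}{2}\rceil$ and $\tilde D^{(k)}\subset V\setminus D^{(k)}$ of size $\lceil\eta n\rceil$; then $C^{(j)}\cup\tilde C^{(j)}$, $\tilde C^{(j)}$, $D^{(k)}\cup\tilde D^{(k)}$ and $\tilde D^{(k)}$ all meet the $\calq'$-thresholds simultaneously.

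Using $\mathbf{1}_{C^{(j)}}=\mathbf{1}_{C^{(j)}\cup\tilde C^{(j)}}-\mathbf{1}_{\tilde C^{(j)}}$ and the analogous identity for $D^{(k)}$, bi-additivity writes $f(C^{(j)},D^{(k)})$ as a signed sum of four terms $f(A,B)$, each with $A$ and $B$ admissible for $\calq'$; hence $|f(C^{(j)},D^{(k)})|\leq 4\delta q\binom{n}{2}n$. Summing over $(j,k)$ and using $r\leq|C|/m$, $s\leq|D|/m'$, $m\geq\eps^*\binom{n}{2}$ and $m'\geq\eps^* n$, I get
\begin{equation*}
|f(C,D)|\leq 4rs\,\delta q\binom{n}{2}n\leq 4\delta\eps^{*-2}q|C||D|,
\end{equation*}
so choosing $\delta<\eps^{*3}/4$ yields $|d_q(C,D)-1|<\eps^*$ as required.

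The only delicate step is the decomposition: the constant $3$ in $\eps^*<(1-\eta)/3$ is used precisely so that block sizes in $\{m,2m,3m\}$ can cover every positive multiple of $m$ while leaving enough room in the complement of each block for an $\eta$-sized padding set. A smaller pool of block sizes would fail to cover all valid values of $|C|$, while larger blocks would overshoot the $(1-\eta)$-fraction bound needed for the padding to fit.
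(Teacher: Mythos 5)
Your proof is correct, and it takes a genuinely different route from the paper. The paper partitions the \emph{entire} ground sets $\binom{V}{2}$ and $V$ into $t\approx 1/\eps^*$ cells, shows via a chain of $\calq'$-comparisons on large unions of cells that any two cell densities $d_q(C_i,D_j)$ and $d_q(C_{i'},D_{j'})$ differ by less than $\eps^*/2$, and then invokes $\calq'$ once more on $W_C\times W_D$ to anchor the average and derive a contradiction if any single cell density strays by $\eps^*$. Your argument instead uses the bi-additivity of $f(X,Y)=e_G(X,Y)-q|X||Y|$ directly: pad each block up to $\eta$-size, expand $f$ of a block pair as a signed sum of four $\calq'$-admissible terms, and sum. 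This is shorter, avoids the contradiction step, and in fact makes no essential use of the constant $3$ in the hypothesis $\eps^*<(1-\eta)/3$ (whereas the paper's chain argument genuinely needs room for unions of $t-3$ cells). One minor remark on your closing paragraph: the pool $\{m,2m,3m\}$ of block sizes is unnecessary — since $|C|$ is by hypothesis a multiple of $m$, blocks of size exactly $m$ already cover every valid $|C|$, and the padding then only needs $m\leq(1-\eta)\binom{n}{2}$, i.e.\ $\eps^*<1-\eta$. So the constant $3$ is not what makes your decomposition work; your approach would prove the lemma under the weaker hypothesis $\eps^*<1-\eta$.
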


\begin{proof}
Fix $\eta>0$ and $0<\eps^*<(1-\eta)/3$. Let $\delta={{\eps^*}^3}/24$ and put $t=1/\eps^*$. Suppose 
$G=(V,E)$ is an 
$n$-vertex $3$-uniform hypergraph that satisfies $\calq'(\eta,\delta,q)$.
Now, fix $C\subset {V\choose 2}$ and $D\subset V$ such that $|C|=k_1\lceil\eps^*{n\choose 
2}\rceil$ and $|D|=k_2\lceil\eps^* n\rceil$ for some positive integers $k_1$ and $k_2$. 
Let $C_1,\ldots, C_{k_1}$ and $D_1,\ldots, D_{k_2}$ be, respectively, partitions of $C$ 
and $D$ such that $|C_1|=\ldots=|C_{k_1}|=\lceil\eps^*{n\choose 2}\rceil$ and 
$|D_1|=\ldots=|D_{k_2}|=\lceil\eps^*n\rceil$. Now we partition the sets 
${V\choose 2}\setminus C$ and $V\setminus D$, 
respectively, in sets $C_{k_1+1},\ldots,C_t$ and $D_{k_2+1},\ldots,D_t$ such that 
$|C_{k_1+1}|=\ldots=|C_t|=\lceil\eps^*{n\choose 2}\rceil$ and 
$|D_{k_2+1}|=\ldots=|D_t|=\lceil\eps^*n\rceil$. Note 
that $|C_t|\leq \eps^*{n\choose 2}$ and $|D_t|\leq \eps^* n$.

We divide the rest of the proof into two parts. First, we 
prove that for any triple $i,j,j'\in[t-1]$, $|e(C_i,D_j) - e(C_i,D_{j'})| \leq 6\delta{n\choose 
2}nq$, and for any triple $i,i',j\in[t-1]$, $|e(C_i,D_j) - e(C_{i'},D_j)| \leq 6\delta{n\choose 
2}nq$. To finish the 
proof we put these estimates together to show that $1-\eps^* < d_q(C,D) < 1+\eps^*$.

Put $X=C_2\cup\ldots\cup C_t$ and $Y=D_3\cup\ldots\cup D_t$. Since $\eps^*<(1-\eta)/3$, we have 
$|X|=(t-2)\lceil\eps^*{n\choose 2}\rceil +|C_t|\geq (t-2)\eps^*{n\choose 2}\geq\eta {n\choose 2}$ 
and  
$|Y|=(t-3)\lceil\eps^*n\rceil + |D_t|\geq (t-3)\eps^*n\geq 
\eta n$. Therefore, using $\calq'(\eta,\delta,q)$, the following two inequalities hold.
\begin{equation}\label{eq:D1D2}
\big|e(X,D_1\cup Y) - e(X,D_2\cup Y)\big|\leq 2\delta |X|(|D_1|+|Y|)q,
\end{equation}

\begin{equation}\label{eq:C1Dj}
\left|\frac{e(C_1\cup X, Y)}{(|C_1|+|X|)|Y|q} - 
\frac{e(C_1\cup X,D_j\cup Y)}{(|C_1|+|X|)(|D_j|+|Y|)q}\right|\leq 2\delta,\text{ for }j\in\{1,2\}.
\end{equation}
Now we define the following for $j\in\{1,2\}$
\begin{equation*}
p_{1j}=\frac{e(C_1\cup X, Y)}{(|C_1|+|X|)|Y|q} - \frac{e(C_1\cup X, 
Y)+e(X,D_j)}{(|C_1|+|X|)(|D_j|+|Y|)q}.
\end{equation*}
By \eqref{eq:C1Dj}, the following holds for $j\in\{1,2\}$
\begin{equation}\label{eq:eC1Dj}
p_{1j}-2\delta \leq \frac{e(C_1,D_j)}{(|C_1|+|X|)(|D_j|+|Y|)q} \leq 
p_{1j}+2\delta.
\end{equation}
Note that $\big| e(X,D_1)-e(X,D_2) \big|=\big|e(X,D_1\cup Y) - e(X,D_2\cup 
Y)\big|$. Thus, using~\eqref{eq:D1D2}, we obtain the following inequality.
\begin{align}\label{eq:difference-p1j}
\left|p_{11}-p_{12}\right|=\left|\frac{e(X,D_1)-e(X,D_2)}{(|C_1|+|X|)(|D_1|+|Y|)q}
\right|\leq \left(\frac{|X|}{|C_1|+|X|}\right)2\delta<2\delta.
\end{align}
Putting \eqref{eq:eC1Dj} and \eqref{eq:difference-p1j} together, we obtain the following inequality.
\[
|e(C_1,D_1) - e(C_1,D_2)|
	<6\delta (|C_1|+|X|)(|D_1|+|Y|)q
	<6\delta{n\choose 2}nq\,.
\]
Applying the same strategy one can prove that, for any triple $i,j,j'\in[t-1]$,
\begin{equation}\label{eq:finalC}
|e(C_i,D_j) - e(C_i,D_{j'})| < 6\delta{n\choose 2}nq.
\end{equation}
Analogously, we obtain the following equation for any triple $i,i',j\in[t-1]$.
\begin{equation}\label{eq:finalD}
|e(C_i,D_j) - e(C_{i'},D_j)| < 6\delta{n\choose 2}nq.
\end{equation}
By~\eqref{eq:finalC}~and~\eqref{eq:finalD}, we have $|e(C_i,D_j) - e(C_{i'},D_{j'})| < 
12\delta{n\choose 2}nq$ for any $i,i',j,j'\in[t-1]$.
Therefore, 
\begin{align}\label{eq:density-CD}
|d_q(C_i,D_j) - d_q(C_{i'},D_{j'})| < \frac{12\delta{n\choose 2}nq}{|C_i||D_j|q}
			<\frac{12\delta}{(\eps^*)^2}
			=\frac{\eps^*}{2}
\end{align}
holds for any $i,i',j,j'\in[t-1]$.
Put $W_C=C_1\cup\ldots\cup C_{t-1}$ and $W_D=D_1\cup\ldots\cup D_{t-1}$. Since $|W_C|\geq \eta 
{n\choose 2}$ and 
$|W_D|\geq \eta n$, we know, by property $\calq'(\eta,\delta,q)$, that
\begin{equation}\label{eq:densidadeW}
1-\delta<d_q(W_C,W_D)<1+\delta.
\end{equation}

Suppose for a contradiction that there exist indexes $i_0,j_0\in[t-1]$ such that either 
$d_q(C_{i_0},D_{j_0})>1+\eps^*$ or 
$d_q(C_{i_0},D_{j_0})<1-\eps^*$. Then, by \eqref{eq:density-CD}, either for all $i,j\in[t-1]$ we 
have 
$d_q(C_{i},D_{j})>1+\eps^*/2$ or for all $i,j\in[t-1]$ we have $d_q(C_{i},D_{j})<1-\eps^*/2$. But 
note that
\begin{equation*}
d_q(W_C,W_D)=
\frac{\sum_{i,j\in [t-1]}d_q(C_i,D_j)|C_i||D_j|q}{|W_C||W_D|q}.
\end{equation*}
Then, either
\begin{align*}
d_q(W_C,W_D)&<\frac{(t-1)^2(1-\eps^*/2)\left\lceil\eps^*{n\choose 
2}\right\rceil\lceil\eps^* 
n\rceil q}{|W_C||W_D|q}= (1-\eps^*/2) < 1-\delta,
\end{align*}
or
\begin{align*}
d_q(W_C,W_D)&>\frac{(t-1)^2(1+\eps^*/2)\left\lceil\eps^*{n\choose 
2}\right\rceil\lceil\eps^* n\rceil q}{|W_C||W_D|q}= (1+\eps^*/2)> 1+\delta,
\end{align*}
a contradiction with \eqref{eq:densidadeW}. Therefore, for all $i,j\in[t-1]$,
\begin{equation}\label{eq:final-density}
1-\eps^*<d_q(C_i,D_j)<1+\eps^*.
\end{equation}
It remains to estimate the densities $d_q(C_{k_1},D_j)$ and $d_q(C_i,D_{k_2})$ with $k_1=t$ and 
$k_2=t$ for all $1\leq i\leq k_1$ and $1\leq j\leq k_2$. Note that $k_1=t$ ($k_2=t$) if and only if 
$\lceil\eps^*{n\choose 2}\rceil=\eps^*{n\choose 2}$ ($\lceil\eps^*{n}\rceil=\eps^*{n}$), but in 
these cases one can prove in the same way we proved~\eqref{eq:final-density}. 
Therefore, putting all these estimates together, we obtain $1-\eps^*<d_q(C,D)<1+\eps^*$.
\end{proof}

The next lemma shows how one can obtain discrepancy properties from $\calq'$ in spanning 
subhypergraphs of sufficiently jumbled $3$-uniform hypergraphs.

\begin{lemma}\label{lemma:Q->disc}
For all $0<\eps',\eta,\sigma<1$ there exists $\delta>0$ such that for every $\alpha>0$ there 
exists $\gamma>0$ such that the following 
holds.

Let $\Gamma=(V,E_\Gamma)$ be an $n$-vertex $(p,\beta)$-jumbled $3$-uniform hypergraph for  
$0<p=p(n)\leq 1$ 
such that $\alpha 
p\leq q\leq p$ and $\beta\leq \gamma pn^{3/2}$. Let $G=(V,E_G)$ be a spanning subhypergraph of 
$\Gamma$. If $G$ satisfies $\calq'(\eta,\delta,q)$, then every pair $(X,Y)_G$ with $X,Y\subset V$
such that $|X|,|Y|\geq \sigma n$ satisfies $\DISC(q,p,\eps')$.
\end{lemma}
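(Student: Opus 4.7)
The plan is to derive $\DISC(q,p,\eps')$ on any pair $(X,Y)_G$ with $|X|,|Y|\geq \sigma n$ by combining Lemma~\ref{lemma:Rodl-extension} (which applies only when set sizes are exact multiples of $\lceil \eps^*{n \choose 2}\rceil$ and $\lceil \eps^* n\rceil$) with a rounding argument, using the $(p,\beta)$-jumbledness of $\Gamma$ to absorb the discrepancy incurred in the rounding. For the constants, I would first pick $\eps^*=\eps^*(\eps',\sigma,\eta)>0$ with $\eps^*<(1-\eta)/3$ and $\eps^*$ sufficiently small compared with $\eps'\sigma^3$; let $\delta$ be the constant produced by Lemma~\ref{lemma:Rodl-extension} applied to the pair $(\eta,\eps^*)$; and given $\alpha$, choose $\gamma>0$ small enough in terms of $\eps^*,\eps',\sigma$ so that $\gamma\sqrt{\eps^*}\ll \eps'\sigma^3$.

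Given arbitrary $X'\subset{X\choose 2}$ and $Y'\subset Y$, set $M_1=\lceil\eps^*{n\choose 2}\rceil$ and $M_2=\lceil\eps^* n\rceil$, and pick $X''\subset X'$ and $Y''\subset Y'$ of sizes $M_1\lfloor |X'|/M_1\rfloor$ and $M_2\lfloor |Y'|/M_2\rfloor$ respectively, so that $|X'\setminus X''|<M_1$ and $|Y'\setminus Y''|<M_2$. The next step is to use the telescoping decomposition
\begin{align*}
e_G(X',Y')-q|X'||Y'|
&=\bigl[e_G(X'',Y'')-q|X''||Y''|\bigr]\\
&\quad+\bigl[e_G(X'\setminus X'',Y')-q|X'\setminus X''||Y'|\bigr]\\
&\quad+\bigl[e_G(X'',Y'\setminus Y'')-q|X''||Y'\setminus Y''|\bigr]
\end{align*}
and bound the three summands in turn.

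The main term is handled by applying Lemma~\ref{lemma:Rodl-extension} with $C=X''\subset{V\choose 2}$ and $D=Y''\subset V$ (whose sizes are of the required form; the degenerate cases $X''=\emptyset$ or $Y''=\emptyset$ are trivial), giving $|e_G(X'',Y'')-q|X''||Y''||\leq \eps^* q|X''||Y''|\leq \eps^* q\,n{n\choose 2}$. For the two error terms I pass to $\Gamma$ via $G\subseteq\Gamma$ and the jumbledness inequality: for any $A\subset{V\choose 2}$ and $B\subset V$,
\[
\bigl|e_G(A,B)-q|A||B|\bigr|\leq (p+q)|A||B|+\beta\sqrt{|A||B|},
\]
so with $|X'\setminus X''|\leq 2\eps^*{n\choose 2}$, $|Y'\setminus Y''|\leq 2\eps^* n$, $q\leq p$, and $\beta\leq \gamma p n^{3/2}$, each of these two error terms is of order $(\eps^*+\gamma\sqrt{\eps^*})\,p\,n^3$. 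Since $|X|,|Y|\geq \sigma n$ implies ${|X|\choose 2}|Y|\geq \sigma^3 n^3/16$ for $n$ large, the above choice of $\eps^*$ (first) and $\gamma$ (last) makes the three summands together at most $\eps' p{|X|\choose 2}|Y|$, as required. The main technical obstacle will be the careful calibration of the parameter chain $\eps^*\to\delta\to\gamma$: in particular, $\gamma$ must be small enough relative to the already-small $\eps^*$ for the $\beta\sqrt{\cdot}$ contribution from jumbledness to be beaten by the target bound, while $\delta$ must be taken small enough for Lemma~\ref{lemma:Rodl-extension} to deliver discrepancy up to $\eps^*$ on the approximating pair $(X'',Y'')$.
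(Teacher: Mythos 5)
Your proposal is correct, and it takes a genuinely different (and arguably cleaner) route than the paper.  The paper also reduces to Lemma~\ref{lemma:Rodl-extension}, but it does so via a four-way case analysis on whether $|X'|$ and $|Y'|$ lie above or below $(1-\eps^*){n\choose 2}$ and $(1-\eps^*)n$, \emph{enlarging} $X'$ and $Y'$ to approximating sets $X^*\supset X'$, $Y^*\supset Y'$ in the ``small'' cases, and separately treats a preliminary regime $|X'|\le\eps'{|X|\choose 2}$ or $|Y'|\le\eps'|Y|$ using $q\ge\alpha p$ to split off an $\alpha$-fraction of the error budget for the $\beta\sqrt{\cdot}$ term.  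You instead always \emph{shrink} $X'$ and $Y'$ to $X''\subset X'$, $Y''\subset Y'$ with sizes exact multiples of $M_1,M_2$, and handle all cases uniformly through the single telescoping identity $e_G(X',Y')-q|X'||Y'|=\big[e_G(X'',Y'')-q|X''||Y''|\big]+\big[e_G(X'\setminus X'',Y')-q|X'\setminus X''||Y'|\big]+\big[e_G(X'',Y'\setminus Y'')-q|X''||Y'\setminus Y''|\big]$, applying Lemma~\ref{lemma:Rodl-extension} to the first bracket and jumbledness plus $q\le p$ to the other two (whose left factor has size $<M_1\le 2\eps^*{n\choose 2}$ resp.\ whose right factor has size $<M_2\le2\eps^* n$).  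This avoids the case split entirely, the ``degenerate'' empty cases fold into the error terms, and as a bonus your argument never invokes the lower bound $q\ge\alpha p$, so your $\gamma$ can be taken independent of $\alpha$ (the paper's $\gamma=\sigma^{3/2}\alpha\eps'/2$ does depend on $\alpha$).  The two approaches cost roughly the same in terms of parameter bookkeeping; the paper's version is closer to the original argument of R\"odl for graphs, while yours isolates the rounding error more transparently.  One small thing to make explicit when writing this up: when $Y''=\emptyset$ (or $X''=\emptyset$) the density $d_q(X'',Y'')$ is undefined, so you should note that the main bracket is then identically $0$ and the surviving bracket(s) are controlled by the same jumbledness estimate; and the inequalities $M_1\le 2\eps^*{n\choose 2}$, ${|X|\choose 2}|Y|\ge\sigma^3n^3/4$ hold only for $n$ sufficiently large, which is implicit in the lemma (as in the paper's own proof).
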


\begin{proof}
Fix  $\eps',\eta,\sigma>0$ and let $\eps^*=\min\big\{{\eps'}^2\sigma^2/24,(1-\eta)/4\big\}$. Let 
$\delta'$ be the 
constant
given by Lemma~\ref{lemma:Rodl-extension} applied with $\eta$ and $\eps^*$. Put 
$\delta=\min\{\delta',\eps'\}$, 
$\alpha>0$ and $\gamma=\sigma^{3/2}\alpha\eps'/2$.

Suppose that $\alpha p\leq q\leq p$ and  $\beta\leq p\gamma n^{3/2}$.
Let $\Gamma=(V,E_{\Gamma})$ be an $n$-vertex $(p,\beta)$-jumbled $3$-uniform hypergraph and 
let $G=(V,E_G)$ be a spanning subhypergraph of $\Gamma$ such that~$G$ satisfies 
$\calq'(\eta,\delta,q)$. 
Let $(X,Y)_G$ be a 
pair with $X,Y\subset V$ such that $|X|,|Y|\geq \sigma n$. We want to prove that $(X,Y)_G$ 
satisfies 
$\DISC(q,p,\eps')$. For this, fix arbitrary subsets $X'\subset {X\choose 2}$ and $Y'\subset Y$. 
We will prove that 
$\big|e_G(X',Y')-q|X'||Y'|\big|\leq \eps'p{|X|\choose 2}|Y|$.\\

\noindent\textbf{Upper bound}.
First, consider the case where $|X'|\leq \eps'{|X|\choose 2}$ or $|Y'|\leq \eps' |Y|$. Note 
that, from the 
choice of $\gamma$ and $\beta$, since $|X|,|Y|\geq \sigma n$, we have
\begin{equation}\label{eq:betabound}
\beta\sqrt{|X'||Y'|} \leq \alpha\eps'p{|X|\choose 2}|Y|.
\end{equation}
Therefore, 
\begin{align}\label{eq:superior-parte1}
e_G(X',Y')	&\leq p|X'||Y'| + \beta\sqrt{|X'||Y'|}\nonumber\\
		&\leq q|X'||Y'| + (1-\alpha)p|X'||Y'| + \beta\sqrt{|X'||Y'|}\nonumber \\
		&\leq q|X'||Y'| + (1-\alpha)p\eps'{|X|\choose 2}|Y| + 
\beta\sqrt{|X'||Y'|}\nonumber\\
		&\leq q|X'||Y'| + \eps'p{|X|\choose 2}|Y|,
\end{align}
where the first inequality follows from the jumbledness of $\Gamma$ and the fact that $G$ is a 
subhypergraph of 
$\Gamma$, the second one follows from the value of $q$, the third one follows from the fact that 
 $|X'|\leq \eps'{|X|\choose 2}$ or $|Y'|\leq \eps' |Y|$, and the last one is a consequence 
of~\eqref{eq:betabound}. 
Thus, we may assume $|X'|> \eps'{|X|\choose 2}$ and $|Y'|> \eps' |Y|$. We 
consider four cases, 
depending on the size of $|X'|$ and $|Y'|$.\\

\noindent\textbf{Case 1: ($|X'|\geq (1-\eps^*){n\choose 2}$ and $|Y'|\geq (1-\eps^*)n$).}
By the choice of $\eps^*$, we have $|X'|\geq \eta{n\choose 2}$ and $|Y'|\geq \eta n$. By 
$\calq(\eta,\delta,q)$ we conclude that
\[
	e_G(X',Y')	\leq (1+\delta)q|X'||Y'|
		\leq q|X'||Y'| + \eps'p{|X|\choose 2}|Y|.
\]

\noindent\textbf{Case 2: ($|X'|< (1-\eps^*){n\choose 2}$ and $|Y'|< (1-\eps^*)n$).}
Note that, since $|X'|< (1-\eps^*){n\choose 2}$ and $|Y'|< (1-\eps^*)n$, there 
exist subsets 
$X^*\subset{V\choose 2}$ and $Y^*\subset V$ such that $X^*=X'\cup X''$ and $Y^*=Y'\cup Y''$, 
with $X'\cap 
X''=\emptyset$ and $Y'\cap Y''=\emptyset$, where $|X''|\leq\eps^*{n\choose 2}$ and $|X^*|$ is 
multiple 
of $\lceil 
\eps^*{n\choose 2}\rceil$, and $|Y''|\leq \eps^* n$ and $|Y^*|$ is a multiple of $\lceil 
\eps^*n\rceil$. Then, we can use 
Lemma~\ref{lemma:Rodl-extension} to obtain the following inequality.
\begin{align*}
e_G(X',Y')&\leq e_G(X^*,Y^*)
	  \leq (1+\eps^*)|X^*||Y^*|q\\
	  &\leq (1+\eps^*)q|X'||Y'| + 2q\big(|X'||Y''|+|X''||Y'|+|X''||Y''|\big).
\end{align*}
Since $\eps^*\leq \eps'^2\sigma^2/16$, we have $|X''|\leq \eps^*{n\choose 2}\leq (\eps'/8)|X'|$ and 
$|Y''|\leq \eps^* 
n\leq (\eps'/8)|Y'|$. Therefore,
\begin{align*}
e_G(X',Y')	&\leq (1+\eps^*)q|X'||Y'|  + 2q 
			\big(3(\eps'/8)|X'||Y'|\big)\\
		&\leq q|X'||Y'| + \frac{\eps'}{4}q|X'||Y'| + \frac{3\eps'}{4}q|X'||Y'|
		\leq q|X'||Y'| + \eps'p{|X|\choose 2}|Y|.
\end{align*}

\noindent\textbf{Case 3: ($|X'|\geq (1-\eps^*){n\choose 2}$ and $|Y'|< (1-\eps^*)n$).} 
As noticed before, since $|Y'|< (1-\eps^*)n$, there exist subsets $Y^*, Y''\subset V$ such that
$Y^*=Y'\cup Y''$ with $Y'\cap Y''=\emptyset$, where $|Y''|\leq \eps^* n$ and $|Y^*|$ is a multiple of 
$\lceil \eps^*n\rceil$. Note that there exist subsets $\tilde{X},X''\subset {V\choose 2}$ such 
that
$X'=\tilde{X}\cup X''$ with $\tilde{X}\cap X''=\emptyset$, where $|X''|\leq \eps^* {n\choose 2}$ 
and 
$|\tilde{X}|$ is a multiple of $\lceil \eps^*{n\choose 2}\rceil$.

If $X''$ is empty, then put $W''=\emptyset$. If $X''$ is not empty, then we ``complete'' 
$X''$ with elements of ${V\choose 2}$ to obtain $W''$ such that $X''\subset W''$ and 
$|W''|=\lceil 
\eps^*{n\choose 2} \rceil$ (note that possibly $W''\cap \tilde{X}\neq\emptyset$). Thus, $|\tilde{X}|+|W''|\leq 
|X'|+\eps^*{n\choose 2}$. By using
Lemma~\ref{lemma:Rodl-extension}, we have
\begin{align*}
e_G(X',Y')&\leq e_G(W'',Y^*)+e_G(\tilde{X},Y^*)\\
	&\leq (1+\eps^*)q\left(|Y^*||W''|+|Y^*||\tilde{X}|\right)\\
		&= (1+\eps^*)q\left(|Y'||W''|+|Y''||W''|+|Y'||\tilde{X}|+|Y''||\tilde{X}|\right)\\
		&\leq (1+\eps^*)q\left(|Y'|\left(|X'|+\eps^*{n\choose 2}\right) + 
|Y''|\left(|X'|+\eps^*{n\choose 
			2}\right)\right)\\
		&\leq (1+\eps^*)q|X'||Y'| + 2q\left(\eps^*{n\choose 2}|Y'| + |X'|\eps^* n + 
\eps^*{n\choose 2} \eps^* n\right).
\end{align*}
Since $\eps^*\leq \eps'^2\sigma^2/16$, we have $\eps^*{n\choose 2}\leq (\eps'/8)|X'|$ and 
$\eps^* n\leq (\eps'/8)|Y'|$. Therefore,
\begin{align*}
e_G(X',Y')	&\leq q|X'||Y'| + \frac{\eps'}{4} q|X'||Y'| + 2q\left(\frac{3\eps'}{8}|X'||Y'|\right)\\
		&\leq q|X'||Y'| + \eps'{|X|\choose 2}|Y|p.
\end{align*}

\noindent\textbf{Case 4: ($|X'|< (1-\eps^*){n\choose 2}$ and $|Y'|\geq (1-\eps^*)n$).}
This case is analogous to Case 3.
\medskip

\noindent\textbf{Lower bound}.
If $|X'|\leq \eps'{|X|\choose 2}$ or 
$|Y'|\leq \eps' |Y|$, then there is nothing to prove, because $\eps'{|X|\choose 
2}|Y|p>q|X'||Y'|$. Therefore, 
assume that $|X'|> \eps'{|X|\choose 2}$ and $|Y'|> \eps' |Y|$. Clearly, there exist subsets 
$\tilde{X}\subset{V\choose 2}$ and $\tilde{Y}\subset V$ such that $X'=\tilde{X}\cup X''$ and 
$Y'=\tilde{Y}\cup 
Y''$, with $\tilde{X}\cap X''=\emptyset$ and $\tilde{Y}\cap Y''=\emptyset$, where 
$|X''|\leq\eps^*{n\choose 2}$ 
and $|\tilde{X}|$ is a multiple of $\lceil 
\eps^*{n\choose 2}\rceil$ and $|Y''|\leq \eps^* n$ and $|\tilde{Y}|$ is a multiple of $\lceil 
\eps^*n\rceil$. 

Since $\eps^*\leq \eps'^2\sigma^2/8$, we have 
\[
	|X''|\leq \eps^*{n\choose 2}\leq (\eps'/4)|X'|\leq \big(\eps'/4(1-\eps^*)\big)|X'|
\] 
and $|Y''|\leq 
\eps^* n\leq 
(\eps'/4(1-\eps^*))|Y'|$. Then, by 
Lemma~\ref{lemma:Rodl-extension}, since $e_G(X',Y')\geq e_G(\tilde{X},\tilde{Y})$, we have
\begin{align*}
e_G(X',Y')	&\geq (1-\eps^*)|\tilde{X}||\tilde{Y}|q\\
		&=(1-\eps^*)q\big(|X'||Y'|-|X'||Y''|-|X''||Y'|+|X''||Y''|\big)\\
		&\geq (1-\eps^*)q|X'||Y'| - (1-\eps^*)q\big(|X'||Y''|+|X''||Y'|\big)\\
		&\geq q|X'||Y'| - \eps^*q|X'||Y'| - (1-\eps^*)q 
			\big((\eps'/2(1-\eps^*))|X'||Y'|\big)\\
		&\geq q|X'||Y'| - \frac{\eps'}{2}q|X'||Y'| - \frac{\eps'}{2}q|X'||Y'|\\
		&\geq q|X'||Y'| - \eps'{|X|\choose 2}|Y|p.
\end{align*}
\end{proof}

\subsection{\texorpdfstring{$\DISC$}{DISC} implies \texorpdfstring{$\PAIR$}{PAIR}}\label{sec:disc-pair}
The next lemma, which is a variation of Lemma 9 in \cite{KoRoScSk10}, makes it possible to 
obtain $\PAIR$ from $\DISC$ in spanning subhypergraphs of sufficiently jumbled 
$3$-uniform hypergraphs.

\begin{lemma}\label{lemma:disc->pair}
For all $0<\alpha\leq 1$ and $\delta'>0$ there exists $\eps'>0$ such that for all $\sigma>0$ there 
exist 
$\gamma>0$ such that the following holds for sufficiently large $n$.

Suppose that
\begin{enumerate}[label=\rmlabel]
 \item $\Gamma=(V,E_\Gamma)$ is an $n$-vertex $3$-uniform $(p,\beta)$-jumbled hypergraph with 
$p\geq 1/\sqrt{n}$,
 \item $G=(V,E_G)$ is a spanning subhypergraph of $\Gamma$, and
 \item $X,Y\subset V$ with $|X|,|Y|\geq \sigma n$.
\end{enumerate}
Then, the following holds.
If $\beta\leq \gamma p^2n^{3/2}$ and $(X,Y)_G$ satisfies $\DISC(q,p,\eps')$ for some $q$ 
with $\alpha p\leq q\leq p$, then $(X,Y)_G$ satisfies $\PAIR(q,p,\delta')$.
\end{lemma}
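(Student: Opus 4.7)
The strategy is to establish the two $\PAIR$ conditions separately. The first follows from $\DISC(q,p,\eps')$ by the standard sign-split trick: let $\mathcal{S}^+ = \{S \in \binom{X}{2} : |N_G(S;Y)| \geq q|Y|\}$ and $\mathcal{S}^- = \binom{X}{2}\setminus \mathcal{S}^+$, so that
\[
\sum_{S \in \binom{X}{2}} \big||N_G(S;Y)| - q|Y|\big| = \big(e_G(\mathcal{S}^+, Y) - q|\mathcal{S}^+||Y|\big) + \big(q|\mathcal{S}^-||Y| - e_G(\mathcal{S}^-, Y)\big) \leq 2\eps' p \binom{|X|}{2}|Y|
\]
by two applications of $\DISC$ to the pairs $(\mathcal{S}^\pm, Y)$; choosing $\eps' \leq \delta'/2$ yields the first $\PAIR$ inequality.

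For the second $\PAIR$ inequality, the plan is to use a defect Cauchy--Schwarz argument. Setting $D(S_1, S_2) := |N_G(S_1, S_2; Y)| - q^2|Y|$, Cauchy--Schwarz gives
\[
\Big(\sum_{S_1, S_2} |D(S_1, S_2)|\Big)^2 \leq \binom{|X|}{2}^2 \sum_{S_1, S_2} D(S_1, S_2)^2,
\]
reducing the task to showing $\sum D^2 \leq (\delta')^2 p^4 \binom{|X|}{2}^2 |Y|^2$. Expanding,
\[
\sum_{S_1, S_2} D(S_1, S_2)^2 = T - 2q^2|Y|\,U + q^4 |Y|^2\binom{|X|}{2}^2,
\]
where $U = \sum_{y \in Y} d_X(y)^2$ with $d_X(y) = |\{S \in \binom{X}{2} : S \cup \{y\} \in E_G\}|$, and $T = \sum_{y_1, y_2 \in Y} c(y_1, y_2)^2$ with $c(y_1, y_2) = \big|\{S \in \binom{X}{2} : S \cup \{y_i\} \in E_G \text{ for } i = 1, 2\}\big|$; in words, $T$ is the ``$K_{2,2}$-count'' in the bipartite incidence graph between $\binom{X}{2}$ and $Y$. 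The quantity $U$ can be estimated as $q^2\binom{|X|}{2}^2|Y| + O(\eps' p^2 \binom{|X|}{2}^2|Y|)$ by combining the sign-split trick applied on the $Y$-side (which bounds $\sum_y |d_X(y) - q\binom{|X|}{2}|$ by $2\eps' p\binom{|X|}{2}|Y|$ via $\DISC$) with the pointwise jumbledness bound $d_X(y) \leq p\binom{|X|}{2} + \beta\sqrt{\binom{|X|}{2}} = O(p\binom{|X|}{2})$, valid once $\gamma$ is sufficiently small.

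The principal obstacle is bounding $T$ sharply enough: one needs $T = q^4\binom{|X|}{2}^2|Y|^2 + O(\eps' p^4 \binom{|X|}{2}^2|Y|^2)$. A naive iteration of $\DISC$ --- fixing $y_1 \in Y$, considering the sub-pair with $W_{y_1} = \{S \in \binom{X}{2} : S \cup \{y_1\} \in E_G\}$, and applying a sign-split over $y_2$ --- loses a factor of $p$, yielding an error of $O(\eps' p^3 \binom{|X|}{2}^2|Y|^2)$ rather than the required $O(\eps' p^4 \binom{|X|}{2}^2|Y|^2)$. Recovering this missing factor of $p$ is precisely where the jumbledness hypothesis $\beta \leq \gamma p^2 n^{3/2}$ enters: since $|W_{y_1}| \approx q\binom{|X|}{2}$ for all but a small fraction of $y_1$'s (by the $Y$-side analog of the first $\PAIR$ just proved), the jumbledness of $\Gamma$ applied to the restricted pairs $(W_{y_1}, Y)$ provides the additional relative-error gain of $p$ through tight control of the associated degree and co-degree sums, while the atypical $y_1$'s contribute only a lower-order term. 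Combining all estimates and choosing $\eps' \leq c(\alpha)(\delta')^2$ and $\gamma = \gamma(\sigma, \alpha, \eps')$ sufficiently small gives $\sum D^2 \leq (\delta')^2 p^4 \binom{|X|}{2}^2 |Y|^2$, which via Cauchy--Schwarz yields the second $\PAIR$ inequality.
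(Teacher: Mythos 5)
Your proof of the first $\PAIR$ inequality by the sign-split trick is correct, and in fact cleaner than the paper's argument for the same inequality (the paper argues by contradiction and separately handles the pairs with atypically large $\Gamma$-degree via Fact~\ref{af:Fato13}). With $\eps'\leq\delta'/2$ this part goes through.

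The second part, however, has a genuine gap, and I believe the route you chose cannot be repaired without being replaced. The Cauchy--Schwarz step reduces the desired $\ell^1$ bound $\sum_{S_1,S_2}|D(S_1,S_2)|\leq\delta' p^2\binom{|X|}{2}^2|Y|$ to the $\ell^2$ bound $\sum D^2\leq(\delta')^2 p^4\binom{|X|}{2}^2|Y|^2$, but the latter is strictly stronger than $\PAIR$ and does not follow from the hypotheses. To see this, note that $|D(S_1,S_2)|$ can be as large as $|Y|$ (take $N_G(S_1,S_2;Y)=Y$), and the jumbledness budget $\beta\leq\gamma p^2 n^{3/2}$ permits, via Fact~\ref{af:Fato13}, up to about $\beta^2/(p^2|Y|)\cdot\binom{|X|}{2}\approx\gamma^2 p^2 n^3\binom{|X|}{2}/|Y|$ pairs $(S_1,S_2)$ for which this can happen. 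Their contribution to $\sum D^2$ is then up to $\gamma^2 p^2 n^3\binom{|X|}{2}|Y|$, and demanding that this be at most $(\delta')^2 p^4\binom{|X|}{2}^2|Y|^2$ forces $\gamma^2\lesssim(\delta')^2\sigma^3 p^2$, i.e.\ $\gamma$ must shrink with $p$ --- not allowed, since $\gamma$ is a constant and $p$ may be as small as $n^{-1/2}$. In short, $\ell^2$ is the wrong norm: a vanishing fraction of pairs with anomalously large common $\Gamma$-neighbourhood is harmless for the $\ell^1$ sum but can dominate the $\ell^2$ sum, and Cauchy--Schwarz throws away exactly that distinction. Relatedly, the claim that ``jumbledness of $\Gamma$ applied to $(W_{y_1},Y)$ provides the additional relative-error gain of $p$'' in the $T$ estimate is unsubstantiated: jumbledness of $\Gamma$ controls edge counts of $\Gamma$ between sets, whereas $T$ is a fourth-moment ($K_{2,2}$-count) statistic of the \emph{subhypergraph} $G$, which is not directly constrained by jumbledness of the ambient $\Gamma$.

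The paper proceeds quite differently, staying entirely in $\ell^1$. It assumes for contradiction that $\sum|D|>\delta' p^2\binom{|X|}{2}^2|Y|$, excises the $\gamma$-controlled sets $\calb_1,\calb_2$ of pairs with anomalous $\Gamma$-degree/codegree, and concludes that a constant fraction of pairs $(S_1,S_2)$ satisfy $|D(S_1,S_2)|>\xi q^2|Y|$ while still having $|N_\Gamma(S_1,S_2;Y)|\leq 4p^2|Y|$. This produces a dense auxiliary graph $B^+$ (or $B^-$) on $\binom{X}{2}$; Fact~\ref{af:Fato22} extracts a pair $(X_1,X_2)$ satisfying graph-$\DISC$, and the hereditary-inheritance Lemma~\ref{lemma:lemma21} shows that almost every $y\in Y$ sees $\DISC$ inherited on $\bigl(N_\Gamma(y,X_1),N_\Gamma(y,X_2)\bigr)_{B^+}$. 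A double count of the triples $\calt$ then gives incompatible upper and lower bounds. If you want to salvage your outline, you should replace the Cauchy--Schwarz/$\sum D^2$ step with this kind of $\ell^1$ contradiction argument and invoke the inheritance lemma for the codegree control.
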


We need the following results in order to prove Lemma~\ref{lemma:disc->pair}. First, consider the 
following fact, which 
is similar to \cite{KoRoScSk10}*{Fact~13}.

\begin{fact}\label{af:Fato13}
Let $\Gamma$ be a $3$-uniform $(p,\beta)$-jumbled hypergraph. Let $U\subset {V\choose 2}$ 
and $W\subset 
V$ and $\xi>0$. If we have $|N_{\Gamma}(\{x,y\},W)|\geq (1+\xi)p|W|$ for every $\{x,y\}\in U$
or we have $|N_{\Gamma}(\{x,y\},W)|\leq (1-\xi)p|W|$ for every $\{x,y\}\in U$, then
\begin{equation*}
|U||W|\leq\frac{\beta^2}{\xi^2p^2}.
\end{equation*}
\end{fact}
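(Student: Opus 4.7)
The plan is to prove Fact~\ref{af:Fato13} by a short double-counting argument, combining the assumed codegree condition with the $(p,\beta)$-jumbledness hypothesis applied to $X=U$ and $Y=W$.

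First I would treat the upper-bound case, where $|N_{\Gamma}(\{x,y\},W)|\geq (1+\xi)p|W|$ for every $\{x,y\}\in U$. Summing this inequality over $U$ gives
\begin{equation*}
\sum_{\{x,y\}\in U}|N_{\Gamma}(\{x,y\},W)|\;\geq\;(1+\xi)\,p\,|U|\,|W|.
\end{equation*}
The left-hand side is precisely $e_{\Gamma}(U,W)$, the quantity controlled by jumbledness. Applying the $(p,\beta)$-jumbledness condition to $X=U$ and $Y=W$, I obtain the matching upper bound
\begin{equation*}
e_{\Gamma}(U,W)\;\leq\;p\,|U|\,|W|+\beta\sqrt{|U|\,|W|}.
\end{equation*}
Comparing these two estimates, the leading $p|U||W|$ cancels, leaving $\xi p|U||W|\leq \beta\sqrt{|U||W|}$, which rearranges to the desired bound $|U||W|\leq \beta^{2}/(\xi^{2}p^{2})$.

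For the lower-bound case, the argument is entirely symmetric: the hypothesis $|N_{\Gamma}(\{x,y\},W)|\leq (1-\xi)p|W|$ summed over $U$ yields $e_{\Gamma}(U,W)\leq(1-\xi)p|U||W|$, while the other side of jumbledness gives $e_{\Gamma}(U,W)\geq p|U||W|-\beta\sqrt{|U||W|}$, and subtracting again produces $\xi p|U||W|\leq \beta\sqrt{|U||W|}$.

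Given how short this is, there is essentially no obstacle; the only point worth checking carefully is that the sum of codegrees $\sum_{\{x,y\}\in U}|N_{\Gamma}(\{x,y\},W)|$ is indeed the quantity $e_{\Gamma}(U,W)$ governed by the jumbledness hypothesis, so that the two estimates may be compared without spurious multiplicative constants. This is a direct analogue of \cite{KoRoScSk10}*{Fact~13} and, once the conventions match, the proof is a one-line application of jumbledness.
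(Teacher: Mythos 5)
Your proof is correct and takes essentially the same approach as the paper: both compare $e_{\Gamma}(U,W)=\sum_{\{x,y\}\in U}|N_{\Gamma}(\{x,y\},W)|$ against the jumbledness bound $p|U||W|\pm\beta\sqrt{|U||W|}$, the only cosmetic difference being that the paper phrases it as a proof by contradiction while you rearrange directly.
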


\begin{proof}
Let $\Gamma$, $U$, $W$ and $\xi$ be as in the statement and suppose that for every $\{x,y\}\in U$ we have
$\big|N_{\Gamma}(\{x,y\},W)\big|\geq (1+\xi)p|W|$. Suppose for a
contradiction 
that $|U||W|>\frac{\beta^2}{\xi^2p^2}$. Then,
$e_{\Gamma}\left(U,W\right) \geq |U|(1+\xi)p|W| >p|U||W| + \beta\sqrt{|U||W|}$,
a contradiction to the jumbledness of $\Gamma$. The case where $|N_{\Gamma}(\{x,y\},W)|\leq 
(1-\xi)p|W|$ for 
every $\{x,y\}\in U$ is analogous.
\end{proof}

Our next result, Lemma~\ref{lemma:lemma21} below, is very similar to~\cite{KoRoScSk10}*{Lemma~21}, but in 
Lemma~\ref{lemma:lemma21} we consider bipartite 
graphs $\Gamma=\big({V\choose 2},V;E_\Gamma\big)$ instead of $\Gamma=(U,V;E_{\Gamma})$ 
in~\cite{KoRoScSk10}, 
and we consider 
subsets $X_1,X_2$ of ${V\choose 2}$ with $|X_1|,|X_2|\geq \eta {n\choose 2}$ instead of subsets 
$X_1,X_2$ of 
$V$ with $|X_1|,|X_2|\geq \eta n$. Due to this fact, the value of $\beta$ in 
Lemma~\ref{lemma:lemma21} is $\gamma 
p^2 n^{3/2}$, while in \cite{KoRoScSk10}*{Lemma~21} we have $\beta=\gamma pn$. The proof of 
Lemma~\ref{lemma:lemma21} is identical to the proof of~\cite{KoRoScSk10}*{Lemma~21} and we 
omit it here.

Let $\Gamma=(V,E_\Gamma)$ be a graph and let $X$, $Y\subset V$. As usual, we denote by $e_{\Gamma}(X,Y)$ the 
number of edges of $\Gamma$ with one end-vertex in $X$ and one end-vertex in $Y$, where edges 
contained in $X\cap Y$ are counted twice. We need to define jumbledness and discrepancy for graphs.

\begin{definition}[Jumbledness for graphs]
 We say that $\Gamma=(V,E_\Gamma)$ is a $(p,\beta)$\emph{-jumbled graph} if, for all subsets $X$, 
$Y\subset V$, we have 
$\big|e_{\Gamma}(X,Y) - p|X||Y|\big|\leq 
\beta\sqrt{|X||Y|}$.
Furthermore, a bipartite graph $\Gamma_B=(U,V;E)$ is called  
$(p,\beta)$\emph{-jumbled} if, for all $X\subset U$ and 
$Y\subset V$, we have $|e_{\Gamma}(X,Y) - p|X||Y|\big|\leq \beta\sqrt{|X||Y|}$.
\end{definition}

\begin{property}[Discrepancy for graphs]
Let $G=(V,E)$ be a graph and let $X$, $Y\subset V$ be disjoint. We say that $(X,Y)$ 
satisfies 
$\DISC(q,p,\eps)$ in $G$ (or $(X,Y)_G$ satisfies $\DISC(q,p,\eps)$) if for all $X'\subset X$ and $Y'\subset Y$ we have
\begin{equation*}
 \big|e_G(X',Y')-q|X'||Y'|\big|\leq \eps p|X||Y|.
\end{equation*}
\end{property}

\begin{lemma}\label{lemma:lemma21}
For all positive real $\varrho_0$ and $\nu$, there exists a positive real $\mu$ such that, for all 
$\sigma'>0$, there exist
$\gamma>0$ and $n_0>0$ such that for all $n\geq n_0$, the following holds.

Suppose
\begin{enumerate}[label=\rmlabel]
 \item $\Gamma=\big({V\choose 2},V;E_\Gamma\big)$ is a bipartite $(p,\beta)$-jumbled graph 
with 
$|V|\geq n$, $p\geq 1/\sqrt{n}$ and $\beta\leq\gamma p^2n^{3/2}$,
 \item $X_1,X_2\subset {V\choose 2}$ and $Y\subset V$ with $|X_1|,|X_2|\geq \sigma'{n\choose 
2}$, $|Y|\geq 
\sigma' n$,
 \item $B=(X_1,X_2;E_B)$ is an arbitrary bipartite graph.
\end{enumerate}
Then, if $(X_1,X_2)_B$ satisfies $\DISC(\varrho,1,\mu)$ for some $\varrho$ with
$\varrho_0\leq \varrho\leq 1$, then for all but at most $\nu |Y|$ vertices $y\in Y$, the pair
$\big(N_\Gamma(y,X_1),N_\Gamma(y,X_2)\big)_B$ satisfies $\DISC(\varrho,1,\nu)$.
\end{lemma}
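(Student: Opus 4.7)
My plan is to argue by contradiction, following the scheme of~\cite{KoRoScSk10}*{Lemma~21}. Suppose the conclusion fails, so that there is a set $Y^*\subset Y$ with $|Y^*|>\nu|Y|$ such that for every $y\in Y^*$ the pair $(N_\Gamma(y,X_1),N_\Gamma(y,X_2))_B$ does not satisfy $\DISC(\varrho,1,\nu)$. For each such $y$, fix witnesses $X_1^y\subset N_\Gamma(y,X_1)$ and $X_2^y\subset N_\Gamma(y,X_2)$ realizing the violation. The strategy is to aggregate these individual failures into a single violation of $\DISC(\varrho,1,\mu)$ for $(X_1,X_2)_B$, contradicting the hypothesis.

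\emph{Step 1 (trimming and classification).} Applying Fact~\ref{af:Fato13} twice to the bipartite graph $\Gamma$ between $\binom V2$ and $V$ (in its ``transposed'' form, valid since $\Gamma$ is a symmetric bipartite graph), the number of $y\in Y$ for which $|N_\Gamma(y,X_i)|\notin(1\pm\xi)p|X_i|$ is at most a constant multiple of $\beta^2/(p^2\xi^2\sigma'\binom{n}{2})$, and this is $o(\nu|Y|)$ provided $\gamma=\gamma(\nu,\sigma',\xi)$ is small enough. Discarding these $y$'s and then partitioning the remaining bad vertices by the sign of the deviation and by the dyadic magnitudes of $|X_1^y|$ and $|X_2^y|$, I retain a subset $Y_2\subset Y^*$ of size $\Omega(\nu|Y|/\log^2 n)$ on which $|N_\Gamma(y,X_i)|=(1\pm\xi)p|X_i|$ for both $i$, the sign of the deviation is fixed (say $+$), and $|X_i^y|\in[s_i,2s_i]$ for prescribed $s_i\leq(1+\xi)p|X_i|$.

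\emph{Step 2 (second-moment aggregation).} Put $b(x_1,x_2)=\mathbf 1[\{x_1,x_2\}\in E_B]-\varrho$ and $T(x_1,x_2)=\sum_{y\in Y_2}\mathbf 1_{X_1^y}(x_1)\,\mathbf 1_{X_2^y}(x_2)$. Summing the positive excesses over $y\in Y_2$ gives
\[
\Sigma\;:=\;\sum_{x_1,x_2}b(x_1,x_2)\,T(x_1,x_2)\;\geq\;c\,\nu\,p^2\,|Y_2|\,|X_1||X_2|
\]
for some $c=c(\nu,\varrho_0)>0$. To bound $\Sigma$ from above I write $T=\overline T+(T-\overline T)$ with $\overline T$ the average value of $T$ on $X_1\times X_2$: the contribution $\overline T\cdot\bigl(e_B(X_1,X_2)-\varrho|X_1||X_2|\bigr)$ is bounded by the $\DISC(\varrho,1,\mu)$ hypothesis applied to $(X_1,X_2)$ itself, combined with the trivial bound $\overline T\leq 4p^2|Y_2|$ coming from Step~1, while the contribution $\sum b\cdot(T-\overline T)$ is bounded by Cauchy--Schwarz as $\sqrt{|X_1||X_2|}\cdot\|T-\overline T\|_2$. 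To push both bounds below $\Sigma$ I need $\mu=\mu(\nu,\varrho_0)$ small and the variance estimate $\|T-\overline T\|_2^2=o\bigl(p^4|Y_2|^2|X_1||X_2|\bigr)$.

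The main obstacle is this variance estimate. Since $X_i^y\subset N_\Gamma(y,X_i)$, the off-diagonal part of $\|T-\overline T\|_2^2$ is majorized by
\[
\sum_{y\neq y'\in Y_2}\bigl|N_\Gamma(y)\cap N_\Gamma(y')\cap X_1\bigr|\cdot\bigl|N_\Gamma(y)\cap N_\Gamma(y')\cap X_2\bigr|,
\]
and pinning this down within a $(1+o(1))$-factor of $\overline T^{\,2}|X_1||X_2|$ requires a further two-level use of Fact~\ref{af:Fato13}: first discard the few pairs $(y,y')\in Y_2^2$ whose $\Gamma$-co-degree in $X_i$ exceeds $(1+\eta)p^2|X_i|$, then bound the contribution of the remaining typical pairs. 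The strengthened hypothesis $\beta\leq\gamma p^2n^{3/2}$ — an extra factor of $p\sqrt n$ over the hypothesis $\beta\leq\gamma pn$ appropriate for the graph version of~\cite{KoRoScSk10}*{Lemma~21} — is precisely what makes this second step tight when the ``vertex'' side of $\Gamma$ is $\binom V2$ rather than $V$. Once the variance bound is established, both upper bounds on $|\Sigma|$ fall below $c\nu p^2|Y_2||X_1||X_2|$ for $\mu$ and $\gamma$ sufficiently small in terms of $\nu,\varrho_0,\sigma'$, which yields the desired contradiction.
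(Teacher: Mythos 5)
The paper does not actually include a proof of this lemma; it states that ``the proof of Lemma~\ref{lemma:lemma21} is identical to the proof of~\cite{KoRoScSk10}*{Lemma~21}'' and omits it. Your contradiction-plus-aggregation scheme, and your observation that the strengthened jumbledness $\beta\le\gamma p^2 n^{3/2}$ (rather than $\gamma pn$) is what compensates for replacing a vertex class $V$ by $\binom{V}{2}$, are both on the right track. But Step~2 contains a genuine gap.

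The Cauchy--Schwarz bound $\bigl|\sum b\,(T-\overline T)\bigr|\le\sqrt{|X_1||X_2|}\,\|T-\overline T\|_2$ together with your stated target $\|T-\overline T\|_2^2=o\bigl(p^4|Y_2|^2|X_1||X_2|\bigr)$ would indeed close the argument, but that target is unattainable. Writing $\|T-\overline T\|_2^2=\|T\|_2^2-\overline T^2|X_1||X_2|$ and expanding $\|T\|_2^2=\sum_{y,y'\in Y_2}|X_1^y\cap X_1^{y'}|\,|X_2^y\cap X_2^{y'}|$, the diagonal ($y=y'$) contribution alone is $\sum_{y}|X_1^y||X_2^y|\asymp p^2|Y_2|\,|X_1||X_2|$, and it survives even if the off-diagonal part equals $\overline T^2|X_1||X_2|$ exactly. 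Your target thus forces $1=o(p^2|Y_2|)$. But with $p$ as small as $1/\sqrt n$ --- which the lemma must cover --- and $|Y_2|\le|Y|\le n$, one has $p^2|Y_2|=O(1)$; and after the $\log^2 n$ dyadic loss you incur in Step~1, $p^2|Y_2|$ in fact tends to $0$. So the variance estimate fails precisely in the extremal regime $p\approx n^{-1/2}$, and the Cauchy--Schwarz step as written cannot be salvaged by tuning $\gamma$. The unweighted $\ell^2$-norm of $T-\overline T$ is simply too coarse a quantity; a correct argument must exploit the rank-one decomposition $T=\sum_{y}\mathbf 1_{X_1^y}\otimes\mathbf 1_{X_2^y}$ directly (for instance by applying Cauchy--Schwarz in the $y$-variable and pairing the resulting $\Gamma$-codegree sums with jumbledness and with the $\DISC(\varrho,1,\mu)$ hypothesis on $(X_1,X_2)_B$), which is the route taken in~\cite{KoRoScSk10}.
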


We need two facts before proving of Lemma~\ref{lemma:disc->pair}.

\begin{fact}[{\cite{KoRoScSk10}*{Fact~22}}]\label{af:Fato22}
Suppose $\varrho_0>0$, $\mu>0$ and $B=(X,E_B)$ is a graph with $|E_B|\geq \varrho_0{|X|\choose 2}$. 
Then there exist disjoint subsets $X_1,X_2\subset X$ such that
\begin{enumerate}[label=\rmlabel]
 \item $(X_1,X_2)_B$ satisfies $\DISC(\varrho,1,\mu)$ for some $\varrho\geq \varrho_0$,
 \item $|X_1|,|X_2|\geq \zeta |X|$ for $\zeta={\varrho_0}^{100/\mu^2}/4$.
\end{enumerate}
\end{fact}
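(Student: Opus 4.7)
The plan is to use a Frieze--Kannan style mean-square density increment argument. I first initialize with a balanced bipartition $X = X_1^{(0)} \sqcup X_2^{(0)}$ obtained from a random split; since $|E_B| \geq \varrho_0 \binom{|X|}{2}$, a first-moment bound gives such a partition with $e_B(X_1^{(0)}, X_2^{(0)}) / (|X_1^{(0)}||X_2^{(0)}|) \geq \varrho_0$ (rescaling constants as needed to absorb the diagonal, at the cost of adjusting $\varrho_0$ by a factor absorbed into the exponent $100/\mu^2$). Let $\varrho_t$ denote the edge density of the bipartite pair at stage $t$.

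I then iterate the following refinement step. If $(X_1^{(t)}, X_2^{(t)})_B$ satisfies $\DISC(\varrho_t, 1, \mu)$, I stop and output this pair with $\varrho = \varrho_t$. Otherwise there exist $X_1' \subseteq X_1^{(t)}$, $X_2' \subseteq X_2^{(t)}$ with $|e_B(X_1',X_2') - \varrho_t |X_1'||X_2'|| > \mu |X_1^{(t)}||X_2^{(t)}|$; in particular $|X_1'||X_2'| \geq \mu |X_1^{(t)}||X_2^{(t)}|$, else the discrepancy is trivially controlled by $|X_1'||X_2'|$. Consider the $2\times 2$ block decomposition of $(X_1^{(t)}, X_2^{(t)})$ induced by $(X_1', X_1^{(t)}\setminus X_1')$ and $(X_2', X_2^{(t)}\setminus X_2')$: the four block densities average to $\varrho_t$ weighted by their sizes, so some block has density at least $\varrho_t + c\mu$ for an absolute constant $c$, and each side of that block has size at least a multiple of $\mu$ times the current side size (otherwise the block contribution is too small to produce the requisite density surplus). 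I take $(X_1^{(t+1)}, X_2^{(t+1)})$ to be that denser block, so $\varrho_t$ is non-decreasing and in particular remains $\geq \varrho_0$.

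For termination, the squared density $\varrho_t^2$ is bounded by $1$, and by convexity of $x\mapsto x^2$ applied to the four block densities, each iteration increases $\varrho_t^2$ by at least $\Omega(\mu^2)$. Hence at most $T = O(1/\mu^2)$ iterations occur. Each step shrinks both sides by a factor of at least a multiple of $\mu$ (or $\varrho_0$, whichever one needs to enforce the density surplus and the block-size lower bound); iterating $T$ times yields sides of size at least $\mu^{O(1/\mu^2)}|X|$, and bookkeeping absolute constants inside the exponent bounds this from below by $\varrho_0^{100/\mu^2}|X|/4 = \zeta|X|$.

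The main obstacle will be the case where the $\DISC$ witness $(X_1', X_2')$ has density smaller than $\varrho_t$: a naive restriction to the witness would \emph{decrease} the density rather than increase it, so one cannot simply take $(X_1', X_2')$ as the next pair. The resolution, sketched above, is to exploit the $2\times 2$ block decomposition and argue by averaging that one of the four sub-blocks must be correspondingly denser; tracking the size lost when passing to this companion block, together with the $O(1/\mu^2)$ bound on the iteration count, is the source of the $\varrho_0^{100/\mu^2}$ factor in $\zeta$. Aside from this, the argument is routine energy-increment bookkeeping.
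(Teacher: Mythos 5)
The paper does not prove this fact; it only cites \cite{KoRoScSk10}*{Fact~22}, so there is no in-paper proof to compare against, and I will assess your argument on its own terms.

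Your overall strategy --- start from a balanced random bipartition, then repeatedly pass to a denser sub-block obtained from a $\DISC$-violating witness via the $2\times 2$ block decomposition --- is the right one, and your treatment of the ``sparse witness'' case by moving to one of the three companion blocks is correct. The averaging does give a block of density at least $\varrho_t+\mu/2$ whose two sides each retain a fraction $\Omega(\mu)$ of the current side sizes. Where the argument breaks down is the termination bookkeeping. You argue via the energy increment that $\varrho_t^2$ gains $\Omega(\mu^2)$ per step and conclude $T=O(1/\mu^2)$ iterations; with a per-step shrinkage factor of $\Theta(\mu)$ this yields a final side size of order $\mu^{\,\Theta(1/\mu^2)}|X|$. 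But $\mu^{\,\Theta(1/\mu^2)}$ is \emph{much smaller} than $\varrho_0^{100/\mu^2}$ once $\mu$ is small with $\varrho_0$ fixed (compare the exponents: $\Theta\bigl(\mu^{-2}\ln(1/\mu)\bigr)$ versus $100\,\mu^{-2}\ln(1/\varrho_0)$), so the asserted ``bookkeeping absolute constants inside the exponent'' does not in fact recover $\zeta=\varrho_0^{100/\mu^2}/4$. The fix is that the density itself increases by $\Omega(\mu)$ per step and is capped at $1$, so actually $T\le 2(1-\varrho_0)/\mu$; with that sharper count, $(\mu/6)^{2(1-\varrho_0)/\mu}$ does dominate $\varrho_0^{100/\mu^2}$ (using $\ln(1/\varrho_0)\ge 1-\varrho_0$), and the claimed $\zeta$ follows. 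So the plan is sound, but you must replace the $O(1/\mu^2)$ iteration bound by the stronger $O((1-\varrho_0)/\mu)$ bound for the size accounting to close.
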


\begin{fact}\label{af:hiperJumbledtoGraphJumbled}
Let $\Gamma=(V,E)$ be a $3$-uniform hypergraph and let $\Gamma'=\big({V\choose 2},V;E'\big)$ be a 
bipartite 
graph, where $E'=\big\{\{\{v_1,v_2\},v\}\colon \{v_1,v_2\}\in{V\choose
2}, v\in V$ and $\{v_1,v_2,v\}\in E\big\}$. Then,~$\Gamma$ is $(p,\beta)$-jumbled if and only 
if 
$\Gamma'$ is $(p,\beta)$-jumbled.
\end{fact}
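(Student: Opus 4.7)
The plan is to unwind the definitions and show that, for every $X\subset{V\choose 2}$ and $Y\subset V$, the quantities $e_\Gamma(X,Y)$ and $e_{\Gamma'}(X,Y)$ are equal on the nose. First, I would note that the edges of $\Gamma'$ are, by construction of $E'$, in one-to-one correspondence with the pair-vertex incidences $(\{v_1,v_2\},v)\in{V\choose 2}\times V$ for which $\{v_1,v_2,v\}\in E$; and these are precisely the incidences counted by $e_\Gamma(X,Y)$ once one restricts the pair to lie in $X$ and the vertex to lie in $Y$. Hence $e_\Gamma(X,Y)=e_{\Gamma'}(X,Y)$ for every such $X$ and $Y$.

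Second, I would compare the two jumbledness inequalities. For the $3$-uniform hypergraph $\Gamma$, being $(p,\beta)$-jumbled means that $\big|e_\Gamma(X,Y)-p|X||Y|\big|\le \beta\sqrt{|X||Y|}$ holds for every $X\subset{V\choose 2}$ and every $Y\subset V$. For the bipartite graph $\Gamma'$ on vertex classes ${V\choose 2}$ and $V$, being $(p,\beta)$-jumbled asks for exactly the same inequality (with $e_{\Gamma'}$ in place of $e_\Gamma$) under exactly the same quantification. Combined with the identity from the first step, both conditions reduce to literally the same statement, and the biconditional follows.

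The only real content of the Fact is the identification of the two counting functions, which is a definitional matter; I do not foresee any technical obstacle. The point of recording the statement explicitly is to license the use of graph-theoretic jumbledness results, most notably Lemma~\ref{lemma:lemma21}, on the auxiliary bipartite graph $\Gamma'$ when one is in fact reasoning about the original hypergraph $\Gamma$.
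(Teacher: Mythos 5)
Your proof is correct, and it is exactly the (omitted) argument on which the Fact rests: by construction of $E'$, the edges of $\Gamma'$ are in bijection with the pair--vertex incidences of $\Gamma$, so $e_\Gamma(X,Y)=e_{\Gamma'}(X,Y)$ for all $X\subset{V\choose 2}$ and $Y\subset V$, and the two jumbledness conditions become the same inequality under this identification. The paper states the Fact without proof, and this is plainly the intended reasoning.

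One small caveat worth recording: the paper's phrase ``the set of triples in $\Gamma$ containing a pair in $X$ and a vertex in $Y$'' could, read very literally, be taken to count distinct hyperedges rather than incidences; under that reading a hyperedge whose three sub-pairs all lie in $X$ and whose three vertices all lie in $Y$ would contribute $1$ to $e_\Gamma(X,Y)$ but $3$ to $e_{\Gamma'}(X,Y)$, and the claimed identity would fail. The normalization $p|X||Y|$ in the definition of $(p,\beta)$-jumbledness (and likewise in $\calq'$) makes clear that incidences are meant, which is the reading you correctly adopt, but it is a point a careful reader could stumble on, and your proof would be stronger if it made the incidence interpretation explicit rather than implicit.
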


We have stated all the tools needed in the proof of Lemma~\ref{lemma:disc->pair}. This proof is very similar to the proof 
of~\cite{KoRoScSk10}*{Lemma~9}.

\begin{proof}[Proof of Lemma~\ref{lemma:disc->pair}]
Let $0<\alpha\leq 1$ and $0<\delta'<1$ be given. Put $\xi=\delta'/6$, $\varrho_0=\delta'/50$ and 
$\nu=\alpha^2\xi\varrho_0/64$. Let $\mu$ be obtained by an application of Lemma~\ref{lemma:lemma21} 
with 
parameters~$\varrho_0$ and $\nu$. Without loss of generality, assume $\mu<\xi\varrho_0/4$. 
Let $\zeta=\varrho_0^{100/\mu^2}/4$ be given and put 
$\eps'=\min\big\{\alpha\delta'^2/36,(\alpha^3\xi\varrho_0\zeta/64)^2\big\}$. Now 
fix  
$\sigma>0$ and let $\sigma'=\zeta\sigma^2/2$. Following the quantification of 
Lemma~\ref{lemma:lemma21} applied with 
parameter $\sigma'$ we obtain 
$\gamma'$ and $n_0$. 
Then, put  
\begin{equation*}
\gamma=\min\big\{\gamma',\sqrt{\sigma^3\delta'/12},(\alpha/2)\sqrt{\xi\varrho_0\sigma\sigma'/24}\big\}.
\end{equation*}
Finally, consider $n$ sufficiently large and suppose $p\geq 1/\sqrt{n}$.

Fix $\beta\leq \gamma p^2n^{3/2}$ and consider a $3$-uniform $(p,\beta)$-jumbled hypergraph 
$\Gamma=(V,E_{\Gamma})$ 
such that $|V|= n$ and let $G=(V,E_G)$ be a spanning subhypergraph of $\Gamma$.
Let $X,Y$ be subsets of $V$ such that $|X|,|Y|\geq \sigma n$. Suppose that $(X,Y)_G$ satisfies
$\DISC(q,p,\eps')$ for some $q$ with $\alpha p\leq q\leq p$, i.e.,
for all $X'\subset{X\choose 2}$ and $Y'\subset Y$ the following holds.
\begin{equation}\label{eq:disc}
 \big|e_G(X',Y')-q|X'||Y'|\big|\leq \eps' p{|X|\choose 2}|Y|.
\end{equation}
We want to prove that the following inequalities hold:
\begin{align}
\sum_{S_1\in {X\choose 2}} \big||N_G(S_1;Y)|-q|Y|\big|&\leq \delta' p {|X|\choose 
2}|Y|,\label{eq:Par1}\\
\sum_{S_1\in {X\choose 2}} \sum_{S_2\in {X\choose 2}}\big||N_G(S_1,S_2;Y)|-q^2|Y|\big|&\leq \delta' 
p^2 {|X|\choose 
2}^2|Y|.\label{eq:Par2}
\end{align}
We start by verifying \eqref{eq:Par1}. For at most 
$\delta'{|X|\choose 
2}/6$ pairs $S\in {X\choose 2}$, we have 
\[
	\big||N_G(S,Y)|-q|Y|\big|>(\delta'/3) q|Y|\,.
\]
Indeed, otherwise there would be a set $B_X\subset {X\choose 2}$ with at least 
$\delta'{|X|\choose 2}/12$ elements such that, for all $\{x,x'\}\in B_X$, either
$|N_G(\{x,x'\},Y)|>(1+\delta'/3)q|Y|$ or for all of them we have $|N_G(\{x,x'\},Y)|<(1-\delta'/3)q|Y|$. In either case, we 
would have
\begin{align*}
\big|e_G(B_X,Y)-q|B_X||Y|\big| &> \frac{\delta'^2}{36}q{|X|\choose 2}|Y|
\geq \frac{\delta'^2\alpha}{36}p{|X|\choose 2}|Y|
\geq \eps' p{|X|\choose 2}|Y|,
\end{align*}
where the last inequality follows from the choice of $\eps'$. But this contradicts~\eqref{eq:disc} 
when we put $X'=B_X$ 
and $Y'=Y$.

Let $W$ be the set of pairs $S\in{X\choose 2}$ such that $|N_{\Gamma}(S,Y)|\geq 2p|Y|$.
By Fact~\ref{af:Fato13} applied to $W$ and $Y$ with $\xi=1$, we know that there exist at most
$\beta^2/p^2|Y|$ elements $S\in W$ such that $|N_{\Gamma}(S,Y)|\geq 2p|Y|$.
Therefore,
\begin{align*}
\sum_{S\in {X\choose 2}} \big||N_G(S,Y)|-q|Y|\big|&\leq {|X|\choose 2}\frac{\delta'}{3}q|Y| + 
\left(\frac{\delta'}{6}{|X|\choose 2}\right)2p|Y| + 
(\beta^2/p^2|Y|)|Y|\\
&\leq p {|X|\choose 2}|Y|\left(\frac{2\delta'}{3}\right) + (\beta/p)^2
\leq \delta' p {|X|\choose 2}|Y|,
\end{align*}
where the last inequality follows from the facts that $\beta\leq\gamma p^2 n^{3/2}$ 
and $\gamma\leq\sqrt{\sigma^3\delta'/12}$. We just 
proved that~\eqref{eq:Par1} holds. 

Suppose for a contradiction that~\eqref{eq:Par2} does not holds. Then, 
\begin{equation}\label{eq:contradicaoPar}
\sum_{S_1\in {X\choose 2}} \sum_{S_2\in {X\choose 2}}\big||N_G(S_1,S_2;Y)|-q^2|Y|\big|> \delta' p^2 
{|X|\choose 2}^2|Y|.
\end{equation}
Define the following sets of ``bad'' pairs.
\begin{align*}
\calb_1&= \left\{(S_1,S_2)\in {X\choose 2}\times{X\choose 2}\colon |N_{\Gamma}(S_1,Y)| > 
2p|Y|\right\},\\
\calb_2&= \left\{(S_1,S_2)\in{X\choose 2}\times{X\choose 2}\setminus\calb_1\colon 
|N_{\Gamma}(S_1,S_2,Y)| > 
4p^2|Y|\right\}.
\end{align*}
Since $\Gamma$ is $(p,\beta)$-jumbled, it follows that
\begin{align*}
|\calb_1|  \leq \frac{\beta^2}{p^2|Y|}{|X|\choose 2}
\leq \frac{\gamma^2 n^3 p^2}{|Y|}{|X|\choose 2}
\leq \frac{\gamma^2 n^2p^2}{\sigma}{|X|\choose 2}
\leq \frac{\delta'}{3}p^2 {|X|\choose 2}^2.
\end{align*}
where the first inequality follows from Fact~\ref{af:Fato13} applied to the sets 
\[
	W=\{S_1\in{X\choose 2}\colon |N_{\Gamma}(S_1,Y)|\geq 2p|Y|\}
\]
and $Y$ with $\xi=1$. The second inequality follows from the 
choice of $\beta$, the 
third one follows from $|Y|\geq\sigma n$, and the last one 
holds because
$|X|\geq\sigma n$ and $\gamma\leq\sqrt{\sigma^3\delta'/12}$.

We want to bound $|\calb_2|$ from above. By definition, if a pair of vertices belongs to~$\calb_2$, 
then it does 
not belong to $\calb_1$. Then, consider a pair of vertices $S_1\in {X\choose 2}$ such that 
$|N_{\Gamma}(S_1,Y)|\leq 
2p|Y|$. Consider a set $Y'\subset Y$ of size exactly $2p|Y|$ that contains $N_{\Gamma}(S_1,Y)$. 
Applying 
Fact~\ref{af:Fato13} to the sets $\big\{S_2\in{X\choose 2}\colon |N_{\Gamma}(S_2,Y')|\geq 
2p|Y'|\big\}$ and 
$Y'$ with $\xi=1$, 
we 
conclude that there are at most $\beta^2/p^2|Y'|$ pairs $S_2\in {X\choose 2}$ such that 
$|N_{\Gamma}(S_1,S_2,Y)| > 
4p^2|Y|$. Therefore,
\begin{align*}
|\calb_2|  \leq {|X|\choose 2}\frac{\beta^2}{p^22p|Y|}
\leq {|X|\choose 2}\frac{\gamma^2 n^2p}{2\sigma}
\leq \frac{\delta'}{6}p {|X|\choose 2}^2,
\end{align*}
The summation below is over the pairs $(S_1,S_2)\in {X\choose 2}\times{X\choose 2}\setminus\calb_1\cup\calb_2$.
By~\eqref{eq:contradicaoPar} and the upper bounds on $\calb_1$ and $\calb_2$ we conclude that
\begin{equation}\label{eq:B1B2saoIrrelevantes}
\begin{aligned}
\sum \big||N_G(S_1,S_2;Y)|-q^2|Y|\big| &> 
\delta' p^2 {|X|\choose 2}^2|Y| - 
|\calb_1||Y| -
|\calb_2|2p|Y|\\
&\geq \delta' p^2 {|X|\choose 2}^2|Y| - \frac{2\delta'}{3}p^2 {|X|\choose 2}^2|Y|\\
&= \frac{\delta'}{3} p^2 {|X|\choose 2}^2|Y|.
\end{aligned}
\end{equation}
The contribution of the pairs $(S_1,S_2)\notin (\calb_1\cup\calb_2)$ with 
$\big||N_G(S_1,S_2;Y)|-q^2|Y|\big|\leq 
\delta' q^2|Y|/6$ to the sum in
\eqref{eq:B1B2saoIrrelevantes} is at most
\begin{align}\label{eq:limitaCima}
\frac{\delta'}{6}p^2{|X|\choose 2}^2|Y|.
\end{align}
Note that, by the definition of $\calb_2$, for all $(S_1,S_2)\notin \calb_1\cup\calb_2$, the following 
holds.
\begin{equation*}
\big||N_G(S_1,S_2;Y)| - q^2|Y|\big|\leq \max\big\{q^2|Y|, (4p^2-q^2)|Y|\big\}\leq 4p^2|Y|. 
\end{equation*}
Hence, by \eqref{eq:B1B2saoIrrelevantes} and \eqref{eq:limitaCima}, there exist at least 
$\delta'{|X|\choose 
2}^2/24$ pairs
$(S_1,S_2)\in
{X\choose 2}\times{X\choose 2}\setminus(\calb_1\cup\calb_2)$ such that
\begin{equation}\label{eq:limitaParesXi}
\big||N_G(S_1,S_2;Y)| - q^2|Y|\big|> \frac{\delta'}{6}q^2|Y|=\xi q^2 |Y|. 
\end{equation}

Now let us define two auxiliary graphs $B^+$ and $B^-$ with vertex-set ${X\choose 2}$ and edge-sets 
as follows.
\begin{align*}
E(B^+)&= \left\{\{S_1,S_2\}\in {{X\choose 2}\choose 2}\colon (1+\xi)q^2|Y|<|N_G(S_1,S_2;Y)|\leq 
4p^2|Y|\right\}\label{eq:arestasGrafoRuim+}\\
E(B^-)&= \left\{\{S_1,S_2\}\in {{X\choose 2}\choose 2}\colon 
|N_G(S_1,S_2;Y)|<(1-\xi)q^2|Y|\right\}\nonumber.
\end{align*}
Since there are at least $\delta'{|X|\choose 2}^2/24$ pairs $(S_1,S_2)\in
{X\choose 2}\times{X\choose 2}$ such that \eqref{eq:limitaParesXi} holds, we have
\begin{align*}
\max\{e(B^+),e(B^-)\}\geq \frac{{|X|\choose 2}^2\delta'/24}{4} - {|X|\choose 2}
\geq \varrho_0 {{|X|\choose 2}\choose 2},
\end{align*}
where in the first inequality the term ``$4$'' in the denominator comes from the fact that now we 
are counting 
unordered pairs and the edges belongs either to $E(B^+)$ or $E(B^-)$. Furthermore, we discount the 
pairs $\{S_1,S_1\}$.

Suppose without lost of generality that $e(B^+)\geq \varrho_0 {{|X|\choose 2}\choose 2}$.
Then, Fact~\ref{af:Fato22} implies that there 
exist subsets 
$X_1,X_2\subset {X\choose 2}$ with $|X_1|$, $|X_2|\geq \zeta 
{|X|\choose 2}$ such that $(X_1,X_2)_{B^+}$ satisfies $\DISC(\varrho,1,\mu)$ for some 
$\varrho\geq\varrho_0$.

Recall that $\Gamma=(V,E_{\Gamma})$ is a $3$-uniform $(p,\beta)$-jumbled hypergraph with $n$ vertices. By 
Fact~\ref{af:hiperJumbledtoGraphJumbled}, the bipartite graph $\Gamma'=({V\choose 2},V;E_{\Gamma'})$, where
\[
	E_{\Gamma'}=\Big\{\big\{\{v_1,v_2\},v\big\}\colon \{v_1,v_2\}\in\tbinom{V}{2}, v\in V\text{ and }\{v_1,v_2,v\}\in E_{\Gamma}\Big\}
\]
is a $(p,\beta)$-jumbled graph. Note that $X_1,X_2\subset {X\choose 
2}\subset{V\choose 2}$ with 
$|X_1|$, $|X_2|\geq \zeta {|X|\choose 2}\geq\zeta {\sigma n\choose 2}\geq 
(\zeta\sigma^2/2){n\choose 2}\geq\sigma'{n\choose 2}$. Therefore, the 
hypotheses of 
Lemma~\ref{lemma:lemma21} are satisfied. By Lemma~\ref{lemma:lemma21} we conclude that for all but 
at most $\nu|Y|$ vertices $y\in Y$, the pair $(N_{\Gamma'}(y,X_1),N_{\Gamma'}(y,X_2))_{B^+}\text{ satisfies 
}\DISC(\varrho,1,\nu)$, which 
implies the following statement for all but 
at most $\nu|Y|$ vertices $y\in Y$.
\begin{equation}\label{eq:muitosDeYcomDisc}
(N_{\Gamma}(y,X_1),N_{\Gamma}(y,X_2))_{B^+}\text{ satisfies }\DISC(\varrho,1,\nu).
\end{equation}

Now let us estimate the number of triplets $(S_1,S_2,y)$ in $X_1\times X_2\times Y$ such that 
$\{S_1,S_2\}$ is an 
edge of $B^+$ and the pairs $S_1$ and $S_2$ belong to the neighbourhood of $y$ in $G$. Formally, we 
define such 
triplets 
as follows.
\[
\calt=\{(S_1,S_2,y)\in X_1\times X_2\times Y\colon S_1\in N_G(y,X_1),
S_2\in N_G(y,X_2),
\{S_1,S_2\}\in E(B^+)\}.
\]
By the definition of $B^+$ we have
\begin{equation}\label{eq:limiteInferiorTriplas}
\begin{aligned}
|\calt|&>(1+\xi)q^2|Y|e_{B^+}(X_1,X_2)
\geq (1+\xi)q^2|Y|(\varrho-\mu)|X_1||X_2|\\
&> \left(1+\frac{\xi}{2}\right)\varrho q^2|X_1||X_2||Y|,
\end{aligned}
\end{equation}
where in the second inequality we used the fact that $(X_1,X_2)_{B^+}\in\DISC(\varrho,1,\mu)$ and the 
last one follows from the choice of $\mu$.

Now we will give an upper bound on $|\calt|$ that contradicts~\eqref{eq:limiteInferiorTriplas}. For 
that, we write
$|\calt| = \sum_{y\in Y}e_{B^+}\big(N_G(y,X_1),N_G(y,X_2)\big)$. Put
\begin{equation*}
Y'=\big\{y\in Y\colon d_\Gamma(y,X_i)\leq 2p|X_i|\text{ for both }i=1,2\big\}.
\end{equation*}
By~\eqref{eq:muitosDeYcomDisc}, for all but at 
most $\nu|Y|$ vertices $y\in Y'$ we have
\begin{align*}
e_{B^+}(N_G(y,X_1),N_G(y,X_2))&\leq \varrho|N_G(y,X_1)\big||N_G(y,X_2)| \\
			      &\ \ \ + \nu|N_\Gamma(y,X_1)||N_\Gamma(y,X_2)|\\
			      &\leq \varrho\ d_G(y,X_1)d_G(y,X_2) + 4\nu p^2|X_1||X_2|.
\end{align*}
The last inequality follows from the fact that $y\in Y'$. Now we will bound the terms related to 
vertices in 
$Y\setminus Y'$. By Fact~\ref{af:Fato13}, we have
$|Y\setminus Y'|\leq \beta^2\big(1/p^2|X_1| + 1/p^2|X_2|\big)$.
Then,
\begin{align*}
|\calt|\leq &\sum_{y\in Y'}\left(\varrho\ d_G(y,X_1)d_G(y,X_2) + 4\nu p^2|X_1||X_2|\right) + \nu|Y| 
4p^2|X_1||X_2|\\
&+ \left(\frac{\beta^2}{p^2|X_1|} + \frac{\beta^2}{p^2|X_2|}\right)|X_1||X_2|.
\end{align*}
The next inequality is obtained by putting the following facts together:
$\nu=\alpha^2\xi\varrho/64$, 
$\gamma\leq 
(\alpha/2)\sqrt{\xi\varrho_0\sigma\sigma'/24}$, $q\geq \alpha p$, 
$|X_1|,|X_2|\geq
\sigma'{n\choose 2}$ and $|Y|\geq\sigma n$.
\begin{equation}\label{eq:Tporcima}
|\calt|\leq \varrho\sum_{y\in Y'}\left(d_G(y,X_1)d_G(y,X_2)\right) + \frac{\xi}{4}\varrho q^2 
|X_1||X_2||Y|.
\end{equation}
Define $Y_i''=\big\{y\in Y\colon d_G(y,X_i)>(1+\sqrt{\eps'})q|X_i|\big\}$ for both $i=1,2$. Since 
$(X,Y)_G\in\DISC(q,p,\eps')$, it is not hard to see that $|Y_i''|\leq
\sqrt{\eps'}p{|X|\choose 2}|Y|/q|X_i|$ for both $i=1,2$. Since $|X_1|,|X_2|\geq \zeta{|X|\choose 2}$,  
$q\geq \alpha p$ and $\eps'\leq(\alpha^3\xi\varrho_0\zeta/64)^2$, the following holds for both $i=1,2$.
\begin{equation*}
|Y_i''|\leq \frac{\xi\varrho\alpha^2}{64}|Y|.
\end{equation*}
Note that
\begin{align*}
\sum_{y\in Y'}\left(d_G(y,X_1)d_G(y,X_2)\right) &= \sum_{y\in Y'\setminus(Y_1''\cup 
Y_2'')}\left(d_G(y,X_1)d_G(y,X_2)\right)\\
&\ \ \ + \sum_{y\in Y'\cap(Y_1''\cup 
Y_2'')}\left(d_G(y,X_1)d_G(y,X_2)\right)\\
&\leq |Y|(1+\sqrt{\eps'})^2q^2|X_1||X_2| + 
\frac{\xi\varrho\alpha^2}{32}|Y|\left(4p^2|X_1||X_2|\right).
\end{align*}
Therefore, since $\sqrt{\eps'}\leq \xi/24$, the above inequality together with \eqref{eq:Tporcima} 
implies
\begin{equation*}
|\calt|\leq \left(1+\frac{\xi}{2}\right)\varrho q^2|X_1||X_2||Y|,
\end{equation*}
a contradiction with~\eqref{eq:limiteInferiorTriplas}.
\end{proof}

\section{Proof of the main result}\label{sec:proof}

In this section we show how to combine the lemmas presented in Section~\ref{sec:lemmas} in order to 
prove 
Theorem~\ref{thm:ext-rodl2}.

\begin{proof}[Proof of Theorem~\ref{thm:ext-rodl2}]
Let $\eps$, $\alpha$, $\eta>0$, $C>1$ and $k\geq 4$ be given. Let $H_1,\ldots,H_r$ be all 
the $k$-vertex
$3$-uniform hypergraphs which are linear and connector-free. Applying Lemma~\ref{lemma:counting} 
with parameters $k$, 
$C$, $\eps$ and $d=k$ for $H_1,\ldots, H_r$, we obtain, respectively, constants 
$\delta_1,\ldots,\delta_r$. Now put $\delta_{\min}=\min\{\delta_1,\ldots,\delta_r\}$.
Let $\delta'$ be given by 
Lemma~\ref{lemma:pair->tuple} applied with $\alpha$ and $\delta_{\min}$. Let $\eps'$ be given by 
Lemma~\ref{lemma:disc->pair} 
applied with $\alpha$ and $\delta'$. Lemma~\ref{lemma:Q->disc} applied with $\eps'$, $\eta$ and 
$\sigma=1$ gives 
$\delta$. Following the quantification of Lemma~\ref{lemma:Q->disc} applied with $\alpha$ we obtain 
$\gamma_1$. 
Finally, following the quantification of Lemma~\ref{lemma:disc->pair} applied with $\sigma=1$ we 
obtain $\gamma_2$.

Put $\gamma=\min\{\gamma_1,\gamma_2\}$. Let ${p=p(n)=o(1)}$ with $p\gg n^{-1/k}$ and let $q=q(n)$ 
be such that $\alpha 
p\leq q\leq p$. In what follows we suppose that $n$ is sufficiently large.

Let $\Gamma=(V,E_{\Gamma})$ be an $n$-vertex $(p,\beta)$-jumbled $3$-uniform hypergraph and let $G$ 
be a spanning 
subhypergraph of $\Gamma$ with $|E(G)|=q{n\choose 3}$ such that $G$ satisfies 
$\calq'(\eta,\delta,q)$ and  
$\BDD(k,C,q)$. Suppose that $\beta\leq\gamma p^2n^{3/2}$. We want to prove that $G$ contains 
$(1\pm\eps)n^{k}q^{|E(H)|}$ copies of all 
linear $3$-uniform connector-free hypergraphs $H$ with $k$ vertices. 
By Lemma~\ref{lemma:Q->disc}, our hypergraph $G$ satisfies $\DISC(q,p,\eps')$. Now apply 
Lemmas~\ref{lemma:disc->pair}~and~\ref{lemma:pair->tuple} in succession to deduce that $G$ satisfies $\PAIR(q,p,\delta')$ and 
$\TUPLE(\delta,q)$.
Now let $H$ be any linear $3$-uniform connector-free hypergraphs $H$ with $k$ vertices.
Since $G$ satisfies $\TUPLE(\delta,q)$ and $\BDD(k,C,q)$, by Lemma~\ref{lemma:counting}, we conclude that  
\begin{equation*}
\big||\mathcal{E}(H,G)| - n^{k}q^{|E(H)|}\big| < \eps n^{k}q^{|E(H)|}.
\end{equation*}
\end{proof}

\section{Concluding Remarks}\label{sec:concluding}

Most of the definitions in this paper generalize naturally to $k$-uniform hypergraphs, for 
$k$ larger than $3$. Lemma~\ref{lemma:counting} holds 
for $k$-uniform hypergraphs for every $k\geq 2$ (for details, see~\cite{KoMoScTa14+I}).
It would be interesting to obtain a version of Theorem~\ref{thm:ext-rodl2} for $k$-uniform 
hypergraphs when $k>3$, but unfortunately such a generalization presents 
new difficulties and will be considered elsewhere.

\begin{bibdiv}
\begin{biblist}

\bib{Bo04}{book}{
      author={Bollob{\'a}s, B{\'e}la},
       title={Extremal graph theory},
      series={London Mathematical Society Monographs},
   publisher={Academic Press, Inc. [Harcourt Brace Jovanovich, Publishers],
  London-New York},
        date={1978},
      volume={11},
        ISBN={0-12-111750-2},
      review={\MR{506522}},
}

\bib{ChGrWi88}{article}{
      author={Chung, F.},
      author={Graham, R.},
      author={Wilson, R.},
       title={Quasi-random graphs},
        date={1989},
        ISSN={0209-9683},
     journal={Combinatorica},
      volume={9},
      number={4},
       pages={345\ndash 362},
         url={http://dx.doi.org/10.1007/BF02125347},
      review={\MR{1054011}},
}

\bib{ChGr02}{article}{
      author={Chung, Fan},
      author={Graham, Ronald},
       title={Sparse quasi-random graphs},
        date={2002},
        ISSN={0209-9683},
     journal={Combinatorica},
      volume={22},
      number={2},
       pages={217\ndash 244},
         url={http://dx.doi.org/10.1007/s004930200010},
        note={Special issue: Paul Erd{\H{o}}s and his mathematics},
      review={\MR{1909084 (2003d:05110)}},
}

\bib{ChGr08}{article}{
      author={Chung, Fan},
      author={Graham, Ronald},
       title={Quasi-random graphs with given degree sequences},
        date={2008},
        ISSN={1042-9832},
     journal={Random Structures \& Algorithms},
      volume={32},
      number={1},
       pages={1\ndash 19},
         url={http://dx.doi.org/10.1002/rsa.20188},
      review={\MR{MR2371048 (2009a:05189)}},
}

\bib{Er79}{inproceedings}{
      author={Erd{\H o}s, Paul},
       title={Some old and new problems in various branches of combinatorics},
        date={1979},
   booktitle={{P}roc.\ 10th {S}outheastern {C}onference on {C}ombinatorics,
  {G}raph {T}heory and {C}omputing},
   publisher={Utilitas Math.},
     address={Winnipeg, Man.},
       pages={19\ndash 37},
}

\bib{KoMoScTa14+I}{unpublished}{
      author={Kohayakawa, Y.},
      author={Mota, G.~O.},
      author={Schacht, M.},
      author={Taraz, A.},
       title={Counting results for sparse pseudorandom hypergraphs~{II}},
        note={submitted},
}

\bib{KoRo03}{incollection}{
      author={Kohayakawa, Y.},
      author={R{\"o}dl, V.},
       title={Szemer\'edi's regularity lemma and quasi-randomness},
        date={2003},
   booktitle={Recent advances in algorithms and combinatorics},
      series={CMS Books Math./Ouvrages Math. SMC},
      volume={11},
   publisher={Springer},
     address={New York},
       pages={289\ndash 351},
         url={http://dx.doi.org/10.1007/0-387-22444-0_9},
      review={\MR{1952989 (2003j:05065)}},
}

\bib{KoRoScSk10}{incollection}{
      author={Kohayakawa, Yoshiharu},
      author={R{\"o}dl, Vojt{\v{e}}ch},
      author={Schacht, Mathias},
      author={Skokan, Jozef},
       title={On the triangle removal lemma for subgraphs of sparse
  pseudorandom graphs},
        date={2010},
   booktitle={An irregular mind},
      series={Bolyai Soc. Math. Stud.},
      volume={21},
   publisher={J\'anos Bolyai Math. Soc.},
     address={Budapest},
       pages={359\ndash 404},
         url={http://dx.doi.org/10.1007/978-3-642-14444-8_10},
      review={\MR{2815608}},
}

\bib{KrSu06}{incollection}{
      author={Krivelevich, M.},
      author={Sudakov, B.},
       title={Pseudo-random graphs},
        date={2006},
   booktitle={More sets, graphs and numbers},
      series={Bolyai Soc. Math. Stud.},
      volume={15},
   publisher={Springer},
     address={Berlin},
       pages={199\ndash 262},
         url={http://dx.doi.org/10.1007/978-3-540-32439-3_10},
      review={\MR{2223394 (2007a:05130)}},
}

\bib{Ni01}{article}{
      author={Nikiforov, V.},
       title={On the edge distribution of a graph},
        date={2001},
        ISSN={0963-5483},
     journal={Combin. Probab. Comput.},
      volume={10},
      number={6},
       pages={543\ndash 555},
         url={http://dx.doi.org/10.1017/S0963548301004837},
      review={\MR{1869845}},
}

\bib{ReRoSc16+b}{article}{
      author={{Reiher}, C.},
      author={{R{\"o}dl}, V.},
      author={{Schacht}, M.},
       title={{Embedding tetrahedra into quasirandom hypergraphs}},
        date={2016-02},
     journal={ArXiv e-prints},
      eprint={1602.02289},
}

\bib{ReRoSc16+a}{article}{
      author={{Reiher}, C.},
      author={{R{\"o}dl}, V.},
      author={{Schacht}, M.},
       title={{Some remarks on the extremal function for uniformly two-path
  dense hypergraphs}},
        date={2016-02},
     journal={ArXiv e-prints},
      eprint={1602.02299},
}

\bib{Ro86}{article}{
      author={R{\"o}dl, Vojt{\v{e}}ch},
       title={On universality of graphs with uniformly distributed edges},
        date={1986},
        ISSN={0012-365X},
     journal={Discrete Math.},
      volume={59},
      number={1-2},
       pages={125\ndash 134},
      review={\MR{837962 (88b:05098)}},
}

\bib{Th87a}{incollection}{
      author={Thomason, Andrew},
       title={Pseudorandom graphs},
        date={1987},
   booktitle={Random graphs '85 ({P}ozna\'n, 1985)},
      series={North-Holland Math. Stud.},
      volume={144},
   publisher={North-Holland},
     address={Amsterdam},
       pages={307\ndash 331},
      review={\MR{89d:05158}},
}

\bib{Th87b}{incollection}{
      author={Thomason, Andrew},
       title={Random graphs, strongly regular graphs and pseudorandom graphs},
        date={1987},
   booktitle={Surveys in combinatorics 1987 ({N}ew {C}ross, 1987)},
      series={London Math. Soc. Lecture Note Ser.},
      volume={123},
   publisher={Cambridge Univ. Press},
     address={Cambridge},
       pages={173\ndash 195},
      review={\MR{88m:05072}},
}

\end{biblist}
\end{bibdiv}

\end{document}